\newtheorem{observation}{Observation}
\newtheorem{lemma}{Lemma}
\newtheorem{theorem}{Theorem} 
\newtheorem{definition}{Definition}
\newtheorem{claim}{Claim}
\newtheorem{conjecture}{Conjecture}
\newcommand{\p}{\mathcal{P}}
\newcommand{\hh}{\mathcal{H}}
\DeclareMathOperator{\ex}{ex}
\title{On the Halin Tur\'an number of short cycles}
\author{Addisu Paulos\\ addisu.wmeskel@aau.edu.et}
\affil{Addis Ababa University  \\ Addis Ababa, Ethiopia}
\date{}
\begin{document}
\maketitle
\begin{abstract}
A Halin graph is a graph constructed by embedding a tree with no vertex of degree two in the plane and then adding a cycle to join the tree's leaves. The Halin Tur\'an number of a graph $F$, denoted as $\ex_{\hh}(n,F)$, is the maximum number of edges in an $n$-vertex Halin graph.  In this paper, we give the exact value of $\ex_{\mathcal{H}}(n,C_4)$, where $C_4$ is a cycle of length 4. We also pose a conjecture for the Halin Tur\'an number of longer cycles.  
\end{abstract}
\subsubsection*{Keywords: Halin graphs,Tur\'an number, Halin Tur\'an number}

\section{Introduction}
Let $F$ be a fixed graph. A graph $G$ is called $F$-\textit{free} if it contains no isomorphic copy of $F$ as a subgraph. For the graph $F$ and a positive integer $n$, the \emph{Tur\'an number} of $F$, denoted by $\ex(n,F)$, is the maximum number of edges in an $n$-vertex $F$-free graph, i.e.,  
\begin{align*}
\ex(n,F)=\max \{e(G): \mbox{$G$ is an $n$-vertex $F$-free graph} \}.
\end{align*}
One of the classical results in extremal graph theory is the Tur\'an's Theorem~\cite{1}, which gives the exact value $\ex(n,K_r)$, where $K_r$ is an $r$-vertex complete graph. This result is the generalization of the Mantel's Theorem~\cite{2} for the case of $K_3$.  A major breakthrough in the study of Tur\'an number of graphs came in 1966, with the proof of the famous theorem by Erd\H{o}s, Stone and Simonovits~\cite{3,4}. They determined an asymptotic value of the Tur\'an number of any fixed non-bipartite graph $F$. In particular, they proved  $\ex(n,F)=\left(1-\frac{1}{\chi(F)-1}\right){n\choose 2}+o(n^2)$, where $\chi(F)$ is the chromatic number of $F$.
Since these results, researchers have been interested in working on the Tur\'an number of class of bipartite (degenerate) graphs and extremal problems in a particular family of graphs. In 2016, Dowden~\cite{5} initiated the study of Tur\'an-type problems in the family of planar graphs.

\begin{definition}
Let $F$ be a fixed graph and $n$ be a positive integer. The \textbf{planar Tur\'an number of $F$}, denoted by $\ex_{\p}(n,F)$, is the maximum number of edges an $n$-vertex $F$-free planar graph contains, i.e., 
\begin{align*}
\ex_{\p}(n,F)=\max \{e(G): \mbox{$G$ is an $n$-vertex $F$-free planar graph} \}.
\end{align*}
\end{definition}
Dowden~\cite{5} determined sharp upper bounds of $\ex_{\p}(n,C_4)$ and $\ex_{\mathcal{P}}(n,C_5)$, where $C_k$ is a cycle of length $k$.  
\begin{theorem}~\cite{5}
\begin{enumerate}
\item For $n\geq 4$, $$\ex_{\mathcal{P}}(n,C_4)\leq \frac{15(n-2)}{7}.$$ 
\item For $n\geq 11$, $$\ex_{\mathcal{P}}(n,C_5)\leq \frac{12n-33}{5}.$$
\end{enumerate}
\end{theorem}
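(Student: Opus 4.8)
The plan is to combine Euler's formula with a count of faces weighted by their length, using the forbidden cycle to restrict which short faces can occur and how they may be arranged. First I would reduce to the case that $G$ is $2$-connected, so that in a fixed plane embedding every face is bounded by a genuine cycle and the length of a face equals the number of edges on its boundary. Writing $f_i$ for the number of faces of length $i$, the global relations I would rely on are $\sum_{i\ge 3} f_i = f$, the edge--face incidence identity $\sum_{i\ge 3} i f_i = 2e$, and Euler's formula $n-e+f=2$.

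For part (1) the $C_4$-free case is clean. Since a face of length $4$ is itself a copy of $C_4$, we have $f_4=0$. The crucial local observation is that no two triangular faces can share an edge: if triangles $abc$ and $abd$ shared the edge $ab$, then $c,a,d,b$ would form a copy of $C_4$. Hence every edge lies on at most one triangular face, which gives $3f_3\le e$. Feeding $f_4=0$ and the bound $f_i\ge 5$ for every remaining face into the incidence identity yields
\[
2e = 3f_3 + \sum_{i\ge 5} i f_i \ge 3f_3 + 5(f-f_3) = 5f - 2f_3 \ge 5f - \tfrac{2e}{3},
\]
so $f\le \tfrac{8e}{15}$. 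Substituting $f = e-n+2$ and solving gives $\tfrac{7e}{15}\le n-2$, that is, $e\le \tfrac{15(n-2)}{7}$.

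For part (2) the $C_5$-free case follows the same skeleton but is substantially harder. Here a face of length $5$ is a copy of $C_5$, so $f_5=0$, and one again wants to bound the weighted short-face count $3f_3 + 2f_4$ arising from the comparison $2e = \sum_{i\ge 3} i f_i \ge 6f - 3f_3 - 2f_4$ against Euler's formula. The new structural input is a family of adjacency restrictions forced by $C_5$-freeness: a triangular face and a quadrilateral face cannot share an edge, since shared edge $ab$ with triangle $abc$ and quadrilateral $a,b,x,y$ produces the $5$-cycle $c,a,y,x,b$; and more generally clusters of triangles meeting along edges or at a common vertex are tightly constrained. I would prove these local lemmas and then run a discharging argument redistributing charge from long faces to short ones, aiming to show that $3f_3 + 2f_4$ cannot exceed roughly $\tfrac{6e}{5}$ up to an additive constant, which combined with Euler's formula delivers $5e\le 12n-33$.

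The main obstacle is precisely this last step for $C_5$. Unlike the $C_4$ case, where a single clean incidence inequality closes the argument, here short faces can still share edges (two triangles forming a permitted $C_4$), so the count must carefully track how triangles and quadrilaterals agglomerate without ever closing up a $5$-cycle. Pinning down the exact constant $\tfrac{12n-33}{5}$ — in particular the additive $-33$ and the hypothesis $n\ge 11$ — is where the delicate case analysis and the treatment of a handful of small or degenerate configurations enter, and I expect those small cases to be the reason the bound is asserted only for $n\ge 11$.
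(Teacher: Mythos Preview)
This theorem is quoted in the paper's introduction as a result of Dowden~\cite{5}; the present paper does not supply its own proof of it, so there is no in-paper argument to compare against. That said, your approach for part~(1) is exactly Dowden's: the two ingredients $f_4=0$ and $3f_3\le e$ (no two triangles share an edge) combined with $\sum i f_i=2e$ and Euler's formula give $e\le \tfrac{15(n-2)}{7}$ immediately. One technical point you glossed over is the reduction to $2$-connectedness; you cannot simply assume an extremal $C_4$-free planar graph is $2$-connected, and in fact Dowden handles this by working with face boundary walks rather than cycles, so that $\sum i f_i=2e$ holds in general while the conclusions $f_4=0$ and ``two triangular faces cannot share an edge'' still follow from $C_4$-freeness after a short argument.

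For part~(2) your sketch is on the right track but, as you acknowledge, incomplete: the heart of Dowden's $C_5$ proof is a careful accounting of how many edges can lie on triangular faces, leading (roughly) to $3f_3+2f_4\le e+\text{const}$, and the additive $-33$ and the threshold $n\ge 11$ do indeed emerge from tracking small configurations. What you have written is a plan rather than a proof, and the discharging you allude to would need to be executed in full to recover the exact constant.
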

Extending the Dowden's result,  Lan, Shi and Song~\cite{6} obtained an upper bound for $\text{ex}_{\p}(n,C_6)$, and later Ghosh, Gy\H{o}ri, Martin, Paulos and Xiao~\cite{7}, improved the bound and gave a sharp upper bound with some interesting constructions realizing their bound.  However $\ex_{\p}(n,C_k)$ is still open for general $k$. We refer \cite{9,10, 11, 8} for a quick survey and conjectures on planar Tur\'an numbers of graphs.
\begin{theorem}\cite{7} \label{sw}
For all $n\geq 18,$
$$\ex_{\p}(n,C_6)\leq \frac{5}{2}n-7.$$ 
\end{theorem}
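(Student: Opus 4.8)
The plan is to use Euler's formula together with a discharging (face-weighting) argument, exploiting the local restrictions that $C_6$-freeness imposes on the face structure of a plane embedding. First I would reduce to the essential case: a standard argument shows that an edge-extremal $C_6$-free planar graph may be assumed $2$-connected, since if $G$ has a cut vertex one splits $G$ along it into blocks, applies the bound to each block, and checks that reassembling does not beat the target (small blocks being handled directly). Having reduced to the $2$-connected case, every face of a plane embedding of $G$ is bounded by a cycle, so I can speak of the length $\ell(\sigma)$ of each face $\sigma$, and I have the two identities $\sum_{\sigma}\ell(\sigma)=2e(G)$ and, from Euler's formula, $f=e(G)-n+2$, where $f$ is the number of faces.

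Next I would record the local consequences of forbidding $C_6$. In a $2$-connected plane graph, two faces that share a single edge and are otherwise disjoint have the boundary of their union a cycle of length $\ell(\sigma_1)+\ell(\sigma_2)-2$; setting this equal to $6$ rules out several adjacencies. In particular there are no $6$-faces, no two $4$-faces may share an edge, and no $3$-face may share an edge with a $5$-face. More refined configurations — short chains of triangles and quadrilaterals whose boundaries close up into a $6$-cycle — yield further forbidden patterns. The upshot is that triangular faces, which are the faces that drive the edge count up toward the planar maximum $3n-6$, cannot cluster arbitrarily.

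The core of the argument is then a scheme designed to bound the number $f_3$ of triangular faces. Writing $2e(G)=3f_3+\sum_{i\ge 4} i\,f_i$ and combining with Euler's formula, the desired inequality $e(G)\le\tfrac52 n-7$ is equivalent to the upper bound $f_3\le (n-6)+\sum_{i\ge 5}(i-4)f_i$; that is, the deficiency contributed by triangles must be paid for by the surplus of the long faces. To establish this I would assign to each face the initial charge $\ell(\sigma)-4$ (so triangles carry $-1$, quadrilaterals $0$, and faces of length $\ge 5$ carry a positive surplus, while the total charge is exactly $-f_3+\sum_{i\ge5}(i-4)f_i$), and then redistribute charge from large faces, and across shared edges between triangles, so that the net negative charge is at most $n-6$. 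The forbidden adjacencies above guarantee that each triangle is incident to enough non-triangular boundary to receive the charge it needs.

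The main obstacle, I expect, is the case analysis for triangular faces that are packed together — books and strips of triangles sharing edges. One must show that in any such block the $C_6$-free condition forces the block to border sufficiently many long faces (or to be globally small), so that the charge can always be balanced; overlooking a single packing pattern would break the tight constant $\tfrac52$. The two remaining technical points are the careful treatment of $5$-faces, since the forbidden $(3,5)$-adjacency interacts delicately with the discharging, and the verification of the additive constant $-7$ and the threshold $n\ge 18$, which come from pinning down the finitely many small or low-connectivity configurations excluded by the reduction.
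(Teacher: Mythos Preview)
The paper does not contain a proof of this theorem. Theorem~\ref{sw} is quoted from reference~\cite{7} (Ghosh, Gy\H{o}ri, Martin, Paulos, Xiao) as background on planar Tur\'an numbers; the present paper is about Halin Tur\'an numbers and never returns to $\ex_{\p}(n,C_6)$. So there is no ``paper's own proof'' to compare your attempt against.

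That said, your outline is broadly in the spirit of how such bounds are obtained in~\cite{7}: reduce to the $2$-connected case, use Euler's formula to convert the edge bound into a statement about face lengths, and exploit the forbidden adjacencies forced by $C_6$-freeness (no $6$-faces, no $3$--$5$ adjacency, no $4$--$4$ adjacency across a single edge). Where your sketch is thin is exactly where the real work lies: the actual argument in~\cite{7} does not get by with a simple face-charge of $\ell(\sigma)-4$ and local transfers, but groups triangular faces into maximal triangular blocks and performs a careful contribution/weighting analysis on those blocks together with their surrounding faces. The case analysis of how such triangular blocks can sit inside a $C_6$-free plane graph is lengthy, and it is what pins down both the leading constant $\tfrac{5}{2}$ and the additive $-7$. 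Your proposal acknowledges this obstacle but does not indicate how to overcome it, so as written it is a plausible plan rather than a proof.
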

 Recently, Fang and Zhai~\cite{12} initiated the study of Tur\'an numbers in the family of outerplanar numbers. For a positive integer $n$ and fixed graph $F$, the outerplanar Tur\'an number of $F$, denoted by $\text{ex}_{\mathcal{OP}}(n,F)$, is the maximum number of edges in an $n$-vertex outerplanar graph containing no isomorphic copy of $F$ as a subgraph. They completely determined the outerplanar Tur\'an numbers of cycles and paths.

In this paper, we initiate the study of Tur\'an number of cycles in the family of Halin graphs. A \textit{Halin graph} $H$ is constructed as follows: Start with a tree $T$ in which each non-leaf has degree at least 3, i.e., every non-leaf of $T$ is with degree at least $3$. Embed the tree
in the plane in a planar fashion and then add new edges to form a cycle $C$ containing all the leaves of T in such a way that the resulting graph $H$ is planar. We write $H=T\cup C$, and we call $T$ and $C$ respectively as \emph{characteristic tree} and \emph{outer cycle} of the Halin graph $H$.  

Halin graphs were studied by Halin~\cite{13}. A Halin graph has at least four vertices. The wheel graph, $W_n$, is an
example of Halin graph with the characteristic tree being a star on $n$ leaves.  Halin graphs are, edge-minimal and $3$-connected~\cite{14}. Every edge of a Halin graph is part of some Hamiltonian cycle~\cite{15}. 

\begin{definition}
Let $n$ be a positive integer and $F$ be a fixed graph. The Halin Tur\'an number of $F$, denoted by $\text{ex}_{\hh}(n,F)$, is the maximum number of edges in an $n$-vertex $F$-free Halin graph, i.e., 
\begin{align*}
\ex_{\hh}(n,F)=\max \{e(H): \mbox{$H$ is an $n$-vertex $F$-free Halin graph} \}.
\end{align*}
\end{definition}
Bondy and Lovasz~\cite{16} have shown that Halin graphs are almost pancyclic.  More precisely, they showed that if a Halin graph $H$ on $n$ vertices does not have any vertex of degree three in its characteristic tree, then it has all cycles of length $\ell$, where, $3\leq \ell\leq n$.  If the characteristic tree contains a vertex of degree three, then cycles of all lengths will still be there with a possible exception of an even-length cycle. In a different study, He and Liu explored the maximum count of short paths in a Halin graph, as discussed in~\cite{44}.

The almost pancyclic property of Halin graphs makes them interesting from a theoretical perspective, as it implies that these graphs are highly connected and can be used to model a wide variety of complex systems and phenomena. As a result, much research in this area focuses on developing efficient algorithms and techniques for analyzing the structure and properties of Halin graphs.

Concerning cycles, it is still interesting to study and distinguish the extremal graph structures and the Halin Tur\'an number of cycles of even length. In this paper, we determine the exact value of the Halin Tur\'an number of the $4$-cycle, and later we pose our conjecture for longer cycles. The following theorem states our main result.   
\begin{theorem}\label{t1}
For $n\geq 16$, 
\begin{equation*}\ex_{\hh}(n,C_4)=
\begin{cases}
\frac{5}{3}(n-1), & 3|(n-1),\\
\frac{5}{3}(n-2)+1, & 3|(n-2),\\
\frac{5}{3}(n-3)+3, & 3|(n-3).
\end{cases}
\end{equation*}
\end{theorem}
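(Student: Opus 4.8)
The plan is to convert the edge count into a leaf count and then bound the latter through Euler's formula. Write $H=T\cup C$ with $T$ the characteristic tree and $C$ the outer cycle, let $L=|C|$ be the number of leaves of $T$, and set $I=n-L$ for the number of internal (non-leaf) vertices. Since $T$ contributes $n-1$ edges and $C$ contributes $L$ edges, $e(H)=(n-1)+L$; thus maximizing $e(H)$ is the same as maximizing $L$, and it suffices to prove the sharp bound $L\le \tfrac{2(n-1)}{3}$ together with matching constructions. First I would record the face data. By Euler's formula $H$ has $L+1$ faces, one of which is the unbounded face bounded by the $L$ edges of $C$; hence there are exactly $L$ inner faces, and since $\sum_f |f|=2e(H)$ their lengths sum to $2e(H)-L=2n-2+L$.

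The core of the upper bound is a length dichotomy on inner faces that spends $C_4$-freeness in two places. Because $H$ is $3$-connected (hence $2$-connected), every facial walk is a cycle, so an inner face of length $4$ would be a copy of $C_4$; thus each inner face is a triangle or has length at least $5$. Next I would show every triangular face is a \emph{leaf pair}: a triangle cannot use two edges of $C$ (two leaves are never tree-adjacent) nor three tree edges ($T$ is acyclic), so it uses exactly one cycle edge $\ell\ell'$ together with two tree edges to a common parent $v$. Moreover no leaf lies in two triangular faces, for that would force three consecutive co-parented leaves $\ell,\ell',\ell''$, and then $v\ell,\ \ell\ell',\ \ell'\ell'',\ \ell''v$ is a $C_4$. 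Writing $P$ for the number of triangular inner faces, these give $2P\le L$ together with
\begin{equation*}
2n-2+L \;=\; \sum_{f\ \text{inner}} |f| \;\ge\; 3P+5(L-P) \;=\; 5L-2P .
\end{equation*}
The right inequality rearranges to $P\ge 2L-n+1$, which clashes with $P\le L/2$ to give $\tfrac{3L}{2}\le n-1$, i.e. $L\le\tfrac{2(n-1)}{3}$. Since $L$ is an integer this is exactly $L\le\floor{\tfrac{2n-2}{3}}$, and substituting into $e(H)=(n-1)+L$ reproduces precisely the three congruence cases in the statement.

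For the matching lower bound I would first treat $n\equiv 1\pmod 3$ with a \emph{star of triangles}: a central internal vertex $r$ joined to $P=\tfrac{n-1}{3}$ apex vertices $u_1,\dots,u_P$, each $u_j$ carrying two leaf children $a_j,b_j$, with the outer cycle visiting $a_1,b_1,a_2,b_2,\dots,a_P,b_P$ in this cyclic order. Here $r$ has degree $P\ge 3$, each $u_j$ has degree $3$, and the inner faces are exactly the $P$ triangles $u_ja_jb_j$ and the $P$ pentagons $u_jb_ja_{j+1}u_{j+1}r$ (indices cyclic); a short case check on common neighbours confirms $H$ is $C_4$-free, and $e(H)=(n-1)+2P=\tfrac{5(n-1)}{3}$. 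For $n\equiv 2$ and $n\equiv 0\pmod 3$ I would realize the offsets $+1$ and $+3$ by incorporating, respectively, one or two additional internal vertices and at most one unpaired (singleton) leaf into the cyclic arrangement, keeping every non-triangular face a pentagon; the hypothesis $n\ge 16$ provides enough apexes both for $\deg(r)\ge 3$ and for these local modifications.

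The upper bound is clean once the face dichotomy is in place, so the main obstacle lies on the construction side. The delicate point is verifying $C_4$-freeness of the extremal families: one must check that the ``seam'' between two adjacent gadgets never closes a $4$-cycle, and a naive attempt to absorb extra vertices (e.g.\ subdividing a spoke $ru_1$ and hanging a single leaf) in fact creates a $4$-face of the form $w u_1 b_1 d$, so the $n\equiv 2,0$ gadgets must be engineered carefully so that the new leaves are never cyclically adjacent to a leaf whose parent is tree-adjacent to theirs. On the bound side, the only place where $C_4$-freeness is genuinely consumed is the claim that the triangular faces are precisely the leaf pairs and are leaf-disjoint, which is exactly what forces $2P\le L$.
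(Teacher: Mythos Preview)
Your upper bound argument is correct and is genuinely different from---and considerably cleaner than---the paper's. The paper proves $e(H)\le\frac{5}{3}(n-1)$ etc.\ by an induction on $n$: it analyses a longest path in the characteristic tree, and depending on the local structure near the path's ends it produces a smaller $C_4$-free Halin graph via one of three reduction lemmas (deleting a pendant of a high-degree semi-branching vertex, deleting a degree-$3$ semi-branching vertex, or contracting an interior tree edge whose two incident faces have length $\ge 6$). The $3\mid(n-2)$ case needs a separate, rather delicate rearrangement argument, and the base cases $n\in\{16,17,18\}$ are verified by hand. Your approach replaces all of this with a five-line double count: each inner face contains exactly one outer-cycle edge, hence there are $L$ inner faces summing to $2n-2+L$; triangular faces are leaf-disjoint sibling pairs (so $2P\le L$), and the remaining faces have length $\ge 5$ by $C_4$-freeness, whence $2n-2+L\ge 5L-2P\ge 5L-L=4L$... actually you combine the two inequalities correctly to get $L\le\tfrac{2(n-1)}{3}$. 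What your route buys is a global, non-inductive bound valid for every $C_4$-free Halin graph with no base-case checking; what the paper's route buys, in principle, is structural information about near-extremal graphs through the explicit reductions.

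The only genuine gap in your proposal is on the construction side for $n\equiv 0,2\pmod 3$. You give the star-of-triangles construction for $n\equiv 1$, which is exactly the paper's $H^n_{16}$ family, but for the other residues you leave the extremal graphs as a promise (``incorporating one or two additional internal vertices and at most one unpaired leaf''), and you yourself flag that the naive subdivision attempt creates a $4$-face. This is not a serious obstacle: the paper's explicit base trees $T_{17}$ and $T_{18}$ show exactly how to splice in one extra interior edge (for $n\equiv 2$) or one interior edge plus one lone leaf placed between two star-clusters (for $n\equiv 0$) so that every non-triangular inner face has length $\ge 5$. Once you write down those two gadgets and verify $C_4$-freeness---a short common-neighbour check of the kind you already did for the star-of-triangles---your proof is complete.
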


The following notations and terminologies are needed. Let $G$ be a graph. We denote the vertex and the edge sets of $G$ by $V(G)$ and $E(G)$ respectively. The number of vertices and edges in $G$ respectively are denoted by $v(G)$ and $e(G)$.  For a vertex $v$ in $G$, the degree of $v$ is denoted by $d_{G}(v)$. We may omit the subscript if the underlying graph is clear.  The set of all vertices in $G$ which are adjacent to $v$ is denoted as $N_G(v)$ or simply $N(v)$ when the underlying graph is clear. For the sake of simplicity, we use the terms $k$-cycle and $k$-path to mean a cycle of length $k$ and a path of length $k$ respectively. We denote a $k$-cycle with vertices $v_1,\ v_2,\dots ,\ v_k$ in sequential order by $(v_1, \ v_2,\ \dots, \ v_k,\ v_1)$.  We denote a $k$-path with vertices $v_0,\ v_2,\dots ,\ v_k$ in sequential order by $(v_0,\ v_1,\dots, v_k)$. A $(u,\ v)$-path is a path with end vertices $u$ and $v$. Given a $k$-path $(v_0,\ v_1,\ \dots,\ v_k)$, we may describe $v_1$ and $v_{k-1}$ as \textit{semi-pendant} vertices of the path. For a plane graph $G$, the length of a cycle $C$ in $G$ is denoted by $|C|$. Similarly, the size of a face $F$ in $G$ is denoted by $|F|$.  

Let $H$ be a Halin graph and $T$ be its characteristic tree. A non-leaf $v\in V(T)$ is an \textit{interior vertex} if every vertex in $N_T(v)$ is not a leaf. A non-leaf $u\in  V(T)$ is a \textit{branching vertex} if it has at most one non-leaf in $N_T(u)$. A \textit{semi-branching vertex} $w\in V(T)$ is a non-leaf that is neither an interior nor a branching vertex. Sometimes we may call a leaf in $T$ a \emph{pendant vertex}.

\section{Proof of Theorem~\ref{t1}}
The following lemmas and observations are important to complete the proof of the theorem.
\begin{lemma}\label{l1}
Let $H$ be a $C_4$-free Halin graph and $T$ be its characteristic tree. For a longest path $L$ in $T$, each semi-pendant vertex of $L$ is a branching vertex and is adjacent to only two leaves.\end{lemma}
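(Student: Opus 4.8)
The plan is to analyze a longest path $L=(v_0,v_1,\dots,v_k)$ in $T$, whose endpoints $v_0$ and $v_k$ must be leaves: if, say, $v_0$ had a neighbour other than $v_1$, that neighbour would not lie on $L$ (as $T$ is acyclic) and could be prepended to $L$, contradicting maximality. I will focus on the semi-pendant vertex $v_1$; the argument for $v_{k-1}$ is identical by symmetry.

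First I would show that every neighbour of $v_1$ except $v_2$ is a leaf, which immediately yields that $v_1$ is a branching vertex. Suppose $u\in N_T(v_1)\setminus\{v_0,v_2\}$ were a non-leaf. Then $u$ has a neighbour $w\neq v_1$, and since $T$ is a tree, $w$ must lie outside $\{v_0,\dots,v_k\}$ (any coincidence would create a cycle). Hence $(v_k,v_{k-1},\dots,v_1,u,w)$ is a path of length $k+1$, contradicting the maximality of $L$. Thus the only possible non-leaf neighbour of $v_1$ is $v_2$, so $v_1$ has at most one non-leaf neighbour and is therefore a branching vertex. Since $v_1$ is itself a non-leaf, $d_T(v_1)\geq 3$, so $v_1$ is adjacent to at least two leaves.

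The crux is to upgrade \say{at least two} to \say{exactly two} using $C_4$-freeness together with the planar Halin structure. Here I would invoke the fact that the leaves of the subtree hanging below $v_1$ occupy a contiguous arc of the outer cycle $C$; since all children of $v_1$ are leaves, this arc consists precisely of the leaf-neighbours $\ell_1,\ell_2,\dots,\ell_t$ of $v_1$, listed in the cyclic order in which they appear on $C$, and consecutive ones are joined by edges of $C$. If $t\geq 3$, then $\ell_1\ell_2$ and $\ell_2\ell_3$ are edges of $C$ while $v_1\ell_1$ and $v_1\ell_3$ are edges of $T$, so $(v_1,\ell_1,\ell_2,\ell_3,v_1)$ is a $4$-cycle in $H$, contradicting that $H$ is $C_4$-free. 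Hence $t\leq 2$, and combined with $t\geq 2$ we conclude $t=2$.

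I expect the main obstacle to be making the planarity step fully rigorous: one must justify carefully that the leaf-neighbours of $v_1$ appear consecutively on $C$ with no other leaf interleaved, so that two of them share a common neighbour on $C$. This is exactly where the embedding of $T$ and the way $C$ threads the leaves in planar order is essential; once the contiguous-arc property is established, the forbidden $C_4$ is immediate and the remainder is bookkeeping.
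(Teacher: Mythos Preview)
Your proposal is correct and follows essentially the same route as the paper: show that maximality of $L$ forces all neighbours of the semi-pendant vertex (other than its neighbour on $L$) to be leaves, then use the planar Halin structure to see that these leaves are consecutive on $C$, so three of them would create a $C_4$. The paper's proof is simply a terser version of yours---it asserts the consecutive-leaf fact by taking $u_1,u_2,u_3$ ``in sequential order in counterclockwise direction'' without further comment---so your extra care about the contiguous-arc property is welcome but not a different idea.
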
 
\begin{proof}
Let $L=(v_0,\ v_1,\ v_2,\ \dots,\ v_k)$. Since $L$ is a longest path, $v_{k-1}$ can not be adjacent to a non-leaf vertex except $v_{k-2}$.  Moreover, from the definition of a Halin graph, $d_T(v_{k-1})\geq 3$. Thus $N_T(v_{k-1})\backslash\{v_{k-2}\}$ contains leaves. If $N_T(v_{k-1})\backslash\{v_{k-2}\}$ contains three leaves, say $u_1,\ u_2$, and $u_3$ in sequential order in counterclockwise direction, then $H$ contains a $4$-cycle, namely $(v_{k-1},\ u_1,\ u_2,\ u_3,\ v_{k-1})$, and hence a contradiction.   
\end{proof}
\begin{lemma}\label{kjh}
Let $H=T\cup C$ be a Halin graph, and $u_1$ and $u_2$ be leaves in $T$ such that $u_1u_2\in E(C)$. Let $F$ be the bounded face incident to $u_1u_2$. If $\mathcal{C}$ is a cycle in $H$ containing $u_1u_2$ we have, $|\mathcal{C}|\geq |F|$.   
\end{lemma}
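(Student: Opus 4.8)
The plan is to reduce the statement to a shortest-path estimate and then prove that estimate by a cut-counting argument driven by the tree structure of $H$.

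First I would pin down the boundary of $F$. Since $u_1$ and $u_2$ are consecutive leaves on the outer cycle $C$, no other leaf lies on the arc of $C$ spanned by the single edge $u_1u_2$; consequently, as $T$ is a tree all of whose leaves lie on $C$, the bounded face $F$ incident to $u_1u_2$ is bounded precisely by the edge $u_1u_2$ together with the unique tree path $P$ from $u_1$ to $u_2$ in $T$. Writing $P=(u_1=w_0,w_1,\dots,w_m=u_2)$, this gives $|F|=m+1$. Now any cycle $\mathcal{C}$ through $u_1u_2$ has the form $\mathcal{C}=\{u_1u_2\}\cup Q$, where $Q$ is a $(u_1,u_2)$-path in $H$ not using the edge $u_1u_2$, so $|\mathcal{C}|=|Q|+1$ and it suffices to prove $|Q|\ge m$.

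The main step is the bound $|Q|\ge m$, which I would obtain by exhibiting $m$ edge-cuts that $Q$ must each traverse while showing that no edge available to $Q$ can traverse two of them. For $i=0,\dots,m-1$, deleting the tree edge $w_iw_{i+1}$ splits $T$, and hence partitions $V(H)$, into $V_i^-\ni u_1$ and $V_i^+\ni u_2$. The edges of $H$ crossing this partition are the tree edge $w_iw_{i+1}$ together with the cycle edges joining a leaf of $V_i^-$ to a leaf of $V_i^+$. Since $u_1\in V_i^-$ and $u_2\in V_i^+$ for every $i$, the path $Q$ must use at least one crossing edge of each of these $m$ cuts. The key claim is that every edge of $H$ other than $u_1u_2$ crosses at most one of the cuts: a tree edge crosses cut $i$ only if it equals $w_iw_{i+1}$, while a cycle edge $xy$ crosses cut $i$ exactly when $w_iw_{i+1}$ lies on the tree path from $x$ to $y$, so the number of cuts it crosses equals the number of edges of $P$ on that tree path.

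Thus the crux is to show that for any cycle edge $xy\neq u_1u_2$ the tree path from $x$ to $y$ contains at most one edge of $P$. This is where the planar embedding and the consecutiveness of $u_1,u_2$ are essential: walking along $C$ away from the edge $u_1u_2$, the remaining leaves appear grouped by the vertex of $P$ at which their subtree attaches, in nondecreasing order $w_1,w_2,\dots,w_{m-1}$, so two leaves consecutive on $C$ attach at the same or adjacent vertices of $P$ and their tree path meets $P$ in at most one edge. I expect this ordering claim to be the main obstacle, since it must be argued carefully from the rotation system: each intermediate $w_j$ has degree at least three and therefore a subtree on the side opposite $F$, whose leaves would otherwise force a leaf strictly between $x$ and $y$ and so cannot be ``skipped.'' Granting the claim, a double count finishes the proof: summing over the $m$ cuts produces at least $m$ incidences between edges of $Q$ and the cuts, whereas each edge of $Q$ contributes at most one incidence, so $|Q|\ge m$ and therefore $|\mathcal{C}|=|Q|+1\ge m+1=|F|$.
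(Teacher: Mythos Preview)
Your argument is correct. The reduction to the tree path $P$ bounding $F$, the $m$ edge--cuts indexed by the edges of $P$, and the double count ``each edge of $H\setminus\{u_1u_2\}$ crosses at most one cut'' all go through; the ordering claim you flag is exactly the point that needs planarity, and your justification (every intermediate $w_j$ has a subtree on the non-$F$ side, whose leaves must lie between those attached at $w_{j-1}$ and $w_{j+1}$) is the right one.

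The paper takes a related but genuinely different route: instead of $m$ edge--cuts it uses the $k-2$ \emph{vertices} $u_3,\dots,u_k$ on the boundary of $F$ as separators. For each such $u$, any $(u_1,u_2)$-path avoiding the edge $u_1u_2$ must pass through $u$ or through a leaf in one of the subtrees hanging off $u$ away from $P$; since these choices are pairwise distinct and distinct from $u_1,u_2$, one gets at least $k$ vertices on $\mathcal{C}$. The underlying planar fact is the same as yours (the subtrees off the internal vertices of $P$ sit between the $u_1$-side and the $u_2$-side along the long arc of $C$), but the paper packages it as a vertex-separator count rather than an edge-cut count. Your version is a bit longer but makes the role of planarity explicit and isolates the one nontrivial step cleanly; the paper's version is shorter but leaves that step implicit in the sentence ``any $(u_1,u_2)$-path \dots\ must contain either $u$ or some child-pendant vertex $u'$.''
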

\begin{proof}
Let the boundary cycle of $F$ be $(u_1,\ u_2, \ u_3,\ \dots, u_k,\ u_1)$. Denote $R=\{u_3,\ u_4,\ \dots, \ u_k\}$.  Each vertex in $R$ is not a leaf in $T$, since $H$ is a Halin graph and the degree of each vertex is at least three. For each vertex $u\in R$, there is a unique leaf $u'$ in $T$ such that we have a $(u,\ u')$-path with the set of interior vertices disjoint from $R$. We may call $u'\ $s as \textit{child-pendant} vertices of $u$. Any $(u_1,\ u_2)$-path other than the edge $u_1u_2$ must contain either $u$ or some child-pendant vertex $u'$ for each $u\in R$. This implies $|\mathcal{C}|\geq |F|$. 
\end{proof}
\begin{lemma}\label{l21}
Let $H$ be a Halin graph with a characteristic tree $T$. Let $e=uv\in E(T)$ such that both $u$ and $v$ are non-leaf in $T$. If $F_1$ and $F_2$ are the two bounded faces incident to $e$, then for a cycle $\mathcal{C}$ in $T$ containing $e$, then  $|\mathcal{C}|\geq \min\{|F_1|,\ |F_2|\}$.  
\end{lemma}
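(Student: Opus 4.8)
The statement should read ``for a cycle $\mathcal{C}$ in $H$ containing $e$'', since the characteristic tree $T$ is acyclic; I read it this way. The plan is to deduce the bound from Lemma~\ref{kjh} by showing that every cycle through the interior edge $e$ is forced to use one of the two outer-cycle edges that bound $F_1$ and $F_2$. Once such an outer edge is located inside $\mathcal{C}$, Lemma~\ref{kjh} immediately controls the length of $\mathcal{C}$ in terms of the corresponding face, so the whole difficulty is pushed onto a cut argument rather than a fresh counting argument.

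Write $e=uv$ and let $\mathcal{C}=\{e\}\cup P$, where $P$ is a $(u,\ v)$-path in $H$ avoiding $e$. First I would analyse the cut created by $e$ in the tree. Since both $u$ and $v$ are non-leaves, deleting $e$ splits $T$ into two subtrees $T_u\ni u$ and $T_v\ni v$, each of which still contains at least one leaf of $T$ (indeed $u$ and $v$ retain degree at least two in their respective components, so each component is a nontrivial subtree whose leaves are leaves of $T$). Because $E(H)=E(T)\cup E(C)$ and $e$ is the only edge of $T$ joining $V(T_u)$ to $V(T_v)$, every other edge of $H$ across this cut must be an edge of the outer cycle $C$ joining a leaf of $T_u$ to a leaf of $T_v$. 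Here I would invoke planarity of the embedding: the leaves of $T_u$ occupy a contiguous arc of $C$ and the leaves of $T_v$ the complementary arc, so $C$ contributes exactly two crossing edges, say $c_1$ and $c_2$. Tracing the boundary of $F_i$ (the edge $e$, a tree path descending on one side of $e$, a single outer edge, and a tree path ascending back to the other endpoint of $e$), one sees that the unique outer edge on $\partial F_i$ is exactly one of these crossing edges; after relabelling, $c_1$ lies on $\partial F_1$ and $c_2$ on $\partial F_2$.

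With the cut understood, the conclusion is short. Since $P$ joins $V(T_u)$ to $V(T_v)$ while avoiding $e$, it must traverse at least one crossing edge, so $c_1\in E(\mathcal{C})$ or $c_2\in E(\mathcal{C})$; say $c_1\in E(\mathcal{C})$. As $c_1\in E(C)$ is an outer edge, the unique bounded face incident to $c_1$ is $F_1$, so Lemma~\ref{kjh} applied to the endpoints of $c_1$ gives $|\mathcal{C}|\geq |F_1|\geq \min\{|F_1|,\ |F_2|\}$; the case $c_2\in E(\mathcal{C})$ is symmetric. I expect the main obstacle to be the planarity step, namely verifying that the leaves of each side form a single arc of $C$ so that the cut carries exactly two outer edges, and that these coincide with the outer edges of $F_1$ and $F_2$; the remaining reduction to Lemma~\ref{kjh} is then purely combinatorial bookkeeping.
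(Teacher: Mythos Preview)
Your proposal is correct and follows essentially the same approach as the paper: identify the two outer-cycle edges $e_1=c_1$ and $e_2=c_2$ lying on $\partial F_1$ and $\partial F_2$, argue that any cycle through $e$ must use one of them, and then invoke Lemma~\ref{kjh}. The paper simply asserts ``clearly, $\mathcal{C}$ contains either $e_1$ or $e_2$'' without justification, whereas you supply the underlying cut argument (removing $e$ separates $T$, the only other $H$-edges across the cut are the two outer edges by contiguity of the leaf arcs), so your version is if anything more complete.
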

\begin{proof}
Since $u$ and $v$ are non-leaf and $H$ is a Halin graph, then $d_T(u),\ d_T(v)\geq 3$. Therefore, we have vertices $u_1,\ u_2\in N(u)$ and $v_1,\ v_2\in N(v)$ such that $(u_1,\ u,\ v, \ v_1)$ and $(u_2,\ u,\ v,\ v_2)$ are paths incident to $F_1$ and $F_2$ respectively. Moreover, we have vertices $u_1',\ v_1'$ and $u_2',\ v_2'$, which are leaves in $T$ such that $e_1=u_1'v_1'$ and $e_2=u_2'v_2'$ are edges incident to $F_1$ and $F_2$ respectively. Notice that, $u_1'$ can be $u_1$ and $v_1'$ can be $v_1$, and  similarly for $u_2'$ and  $v_2'$ with $u_2$ and  $v_2$. Clearly, $\mathcal{C}$ contains either $e_1$ or $e_2$, but not both. If $\mathcal{C}$ contains $e_1$, then by Lemma~\ref{kjh}, $|\mathcal{C}|\geq |F_1|$. Moreover if $\mathcal{C}$ contains $e_2$, the $|\mathcal{C}|\geq |F_2|$. Therefore, $|C|\geq \min\{|F_1|,\ |F_2|\}$. 
\end{proof}
\begin{lemma}\label{mainlemma1}
Let $H$ be an $n$-vertex $C_4$-free Halin graph with the characteristic tree $T$. If $T$ contains a semi-branching vertex of degree at least 4, then there is an $(n-1)$-vertex $C_4$-free Halin graph $H'$ such that $e(H)=e(H')+2$. 
\end{lemma}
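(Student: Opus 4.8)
The plan is to perform a local surgery on the characteristic tree that removes one vertex while deleting exactly two edges, all the while preserving the Halin and $C_4$-free properties. Let $w$ be a semi-branching vertex with $d_T(w) \geq 4$. By definition of a semi-branching vertex, $w$ is neither interior nor branching, so among its neighbors in $T$ there are at least two non-leaves \emph{and} at least one leaf. I would first set up notation for the neighbors of $w$: list its non-leaf neighbors and its leaf neighbors, and use the planar embedding to order them cyclically around $w$. The key observation I expect to exploit is that since $H$ is $C_4$-free, Lemma~\ref{l1}-type reasoning restricts how many leaves can sit consecutively around $w$ on the outer cycle; in particular $w$ cannot have three leaf-neighbors appearing consecutively on $C$, or else $(w, u_1, u_2, u_3, w)$ would be a $4$-cycle.

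The main step is to specify the surgery that shrinks $w$. The natural move is to detach $w$'s leaf (pendant) neighbors and redistribute or suppress structure so that the vertex count drops by one and the edge count drops by two. Concretely, I would take a pendant neighbor $p$ of $w$ together with one other suitably chosen edge at $w$, and contract or delete to merge $w$ with an adjacent non-leaf neighbor, reattaching the severed leaves to preserve that every non-leaf still has degree at least $3$ and that the leaves of the new tree $T'$ still form a single outer cycle $C'$ in a planar fashion. The arithmetic target is rigid: I must lose precisely one vertex and precisely two edges, so the surgery has to remove one tree edge and correspondingly adjust one outer-cycle edge (replacing two consecutive cycle edges through a deleted leaf by a single edge), netting $-2$ edges overall. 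I would verify the Halin conditions for $H' = T' \cup C'$: connectedness and acyclicity of $T'$, the degree-$\geq 3$ condition at every non-leaf (this is where the hypothesis $d_T(w)\geq 4$ is essential, since it guarantees $w$ retains degree $\geq 3$ after shedding a pendant, or that its merge partner absorbs enough), and planarity of the outer cycle on the new leaf set.

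The hardest part will be proving that the modified graph $H'$ remains $C_4$-free. Deleting edges cannot create a $4$-cycle, but the reattachment or contraction step genuinely can, since it brings previously distant vertices into adjacency. I would argue this by checking that any new $4$-cycle in $H'$ would have to pass through the newly created adjacency, and then lift it back to a $4$-cycle (or a forbidden configuration) in the original $C_4$-free $H$, deriving a contradiction. Here I anticipate leaning on Lemmas~\ref{kjh} and~\ref{l21}, which lower-bound the length of any cycle through a leaf-edge or an internal tree-edge by the size of an incident bounded face; if the faces adjacent to the altered region all have size at least $5$, then no short cycle can be routed through the new edge. I would therefore need a preliminary claim that the bounded faces meeting $w$ have size at least $5$ in a $C_4$-free Halin graph, which should follow from the degree and $C_4$-free constraints established in Lemma~\ref{l1}.

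Finally, I would confirm the edge-count bookkeeping $e(H) = e(H') + 2$ explicitly by summing the contributions of the deleted tree edge and the net change on the outer cycle, and remark that since $H'$ has $n-1$ vertices and is a legitimate $C_4$-free Halin graph, this furnishes exactly the reduction the lemma asserts. The principal risk in executing this plan is ensuring the degree-$\geq 3$ requirement survives at \emph{every} affected vertex simultaneously; the assumption that $w$ is semi-branching (hence has at least two non-leaf neighbors) rather than merely branching is what gives enough room to reroute without starving any non-leaf of degree, and I expect that hypothesis to be used crucially at precisely this point.
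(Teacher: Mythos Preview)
Your proposal contains the right core idea but is cluttered with an unnecessary alternative that obscures it. The paper's surgery is exactly the simple one buried in your arithmetic paragraph: pick a leaf neighbor $u$ of the semi-branching vertex $v$, delete $u$, and reconnect the outer cycle by adding the edge $u_1u_2$ between $u$'s two cycle-neighbors. This removes three edges ($vu$, $uu_1$, $uu_2$) and adds one, giving the required $e(H)=e(H')+2$. There is no contraction, no merging of $v$ with a non-leaf neighbor, and no ``rerouting''; drop that thread entirely.

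Two points where your analysis drifts. First, the face-size claim you propose (``all bounded faces meeting $w$ have size at least $5$'') is false in general---a face can perfectly well be a triangle. What is actually true and sufficient is that of the two bounded faces $F_1,F_2$ incident to the tree edge $vu$, \emph{at least one} has size $\geq 5$: neither can be a $4$-face since $H$ is $C_4$-free, and if both were $3$-faces then $(v,u_1,u,u_2,v)$ would be a $4$-cycle in $H$. After the surgery these two faces merge into a single face $F$ of size $|F_1|+|F_2|-3\geq 5$, and then Lemma~\ref{kjh} bounds every cycle through the new edge $u_1u_2$ from below by $|F|$, so no $C_4$ is created. Second, you misattribute the role of the hypotheses: the degree condition $d_{T'}(v)\geq 3$ comes directly from $d_T(v)\geq 4$, not from $v$ having two non-leaf neighbors. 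The ``semi-branching'' part of the hypothesis is used only to guarantee that $v$ has a leaf neighbor to delete in the first place.
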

\begin{proof}
Let $C$ be the outer cycle of $H$. Let $v\in V(T)$, with $d_H(v)\geq 4$, be a semi-branching vertex and $u\in N_T(v)$ be a leaf. Let the path $(u_1,\ u,\ u_2)$ be the portion of $C$ in the clockwise direction and denote $F_1$ and $F_2$ as faces in $H$ incident to the paths $(v,\ u,\ u_1)$ and $(v,\ u,\ u_2)$ respectively. It can be seen that either $|F_1|$ and $|F_2|$ is at least 5. Indeed, since $H$ is a $C_4$-free graph, no face is of size $4$. On the other hand if $|F_1|=|F_2|=3$, then $(v,\ u_1,\ u,\ u_2,\ v)$ is in $H$ and this contradicts the $C_4$-free assumption of $H$.  Now obtain the graph $H'$ by deleting $u$ and joining the vertices $u_1$ and $u_2$ with an edge. $H'$ is a Halin graph with characteristic tree $T'=T-u$, as $d_{T'}(v)\geq 3$ and $d_{T'}(w)=d_{T}(w)$ for every $w\in V(T)\backslash \{v\}$. Let $C'$ be the characteristic tree and the outer cycle of $H'$. $H'$ is $C_4$-free as the bounded face, say $F$, incident to the edge $u_1u_2$ is of size at least 5, and by Lemma~\ref{kjh} the boundary cycle of $F$ is the smallest cycle containing $u_1u_2$.
\end{proof}
\begin{lemma}\label{rfr}
Let $H$ be an $n$-vertex $C_4$-free Halin graph with characteristic tree $T$. Let $(u,\ v,\ w)$ be a path in $T$ such that $v$ is a semi-branching vertex with $d_T(v)=3$. If the bounded face incident to the path is with size at least $6$, then there is an $(n-2)$-vertex $C_4$-free Halin graph $H'$ such that $e(H)=e(H')+3$. 
\end{lemma}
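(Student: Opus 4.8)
The plan is to build $H'$ by a local surgery that excises $v$ together with its unique leaf neighbour. Since $v$ is semi-branching with $d_T(v)=3$, it has exactly two non-leaf neighbours and exactly one leaf neighbour, so I take $u,w$ to be the two non-leaf neighbours (so that the given path $(u,v,w)$ runs along the two internal edges at $v$) and write $\ell$ for the leaf neighbour. Let $a,b$ be the two neighbours of $\ell$ on the outer cycle $C$. I would define $H'$ by deleting the vertices $v$ and $\ell$ together with the five incident edges $vu,vw,v\ell,\ell a,\ell b$, and adding the two edges $uw$ and $ab$. A direct count then gives $v(H')=n-2$ and $e(H')=e(H)-5+2=e(H)-3$, which is the claimed edge relation.

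First I would verify that $H'$ is again a Halin graph. Deleting the leaf $\ell$ and then suppressing the resulting degree-two vertex $v$ (that is, replacing the path $u-v-w$ by the single edge $uw$) turns $T$ into a tree $T'$ in which $u$ and $w$ keep their old degrees and every other non-leaf keeps its degree, so every non-leaf of $T'$ still has degree at least $3$. Splicing $\ell$ out of $C$ and inserting $ab$ turns $C$ into a cycle $C'$ through exactly the leaves of $T'$, and since the whole surgery takes place inside the faces at $v$ and at $\ell$, planarity and the correct outer embedding are preserved. Hence $H'=T'\cup C'$ is an $(n-2)$-vertex Halin graph.

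The heart of the matter is that $H'$ is $C_4$-free. Because $H'$ with the two new edges removed is a subgraph of the $C_4$-free graph $H$, every $4$-cycle of $H'$ must contain $uw$ or $ab$, so it suffices to show that every cycle through one of these edges is long. The edge $ab$ joins two leaves of $C'$, so by Lemma~\ref{kjh} the shortest cycle through $ab$ is the boundary of the bounded face incident to it; I will identify that face as the merge of the two faces $F_1,F_2$ that were incident to $v\ell$ in $H$, and bound its size below by $6$. The edge $uw$ is an internal edge of $T'$ joining the non-leaves $u,w$, so by Lemma~\ref{l21} the shortest cycle through $uw$ has length at least $\min\{|F'|,|F''|\}$, where $F',F''$ are its two incident bounded faces. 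One of these, on the side of the old path away from $\ell$, is obtained from the given face $F$ by replacing the subpath $u-v-w$ with the edge $uw$, hence has size $|F|-1\ge 5$; the other is the same merged $F_1,F_2$ face of size at least $6$. Thus every cycle through $uw$ has length at least $5$ and every cycle through $ab$ has length at least $6$, so $H'$ contains no $C_4$.

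The step I expect to be the main obstacle is the face bookkeeping in the last paragraph, namely pinning down the two bounded faces newly incident to $uw$ and $ab$ after the simultaneous removal of $v$ and $\ell$. The key sublemma I would establish first is that $|F_1|,|F_2|\ge 5$: neither can equal $4$ since $H$ is $C_4$-free, and neither can be a triangle, since a triangular $F_1$ would force an edge from $u$ to $\ell$ (and $F_2$ an edge from $w$ to $\ell$), impossible because the only neighbours of $\ell$ are $v,a,b$, with $a,b$ leaves and $u,w$ non-leaves. Granting $|F_1|,|F_2|\ge 5$, tracing the leftover boundaries shows the merged face has size $|F_1|+|F_2|-4\ge 6$, after which the two applications of Lemmas~\ref{kjh} and~\ref{l21} close the argument.
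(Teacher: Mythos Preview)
Your construction is exactly the one the paper uses: delete $v$ together with its unique leaf neighbour (the paper calls it $v'$, you call it $\ell$) and add the edges $uw$ and $ab$ (the paper's $u'w'$). Your verification of $C_4$-freeness is also the same in spirit, and in fact a bit more carefully written: you invoke Lemma~\ref{l21} for the internal edge $uw$ and Lemma~\ref{kjh} for the outer-cycle edge $ab$, and you make explicit the face count $|F_1|+|F_2|-4\ge 6$, whereas the paper only says the two faces at $uw$ have sizes at least $5$ and $6$ and cites Lemma~\ref{kjh}. Your sublemma ruling out triangular $F_1,F_2$ (via the fact that the leaf's only neighbours besides $v$ are the two cycle leaves $a,b$, which are distinct from the non-leaves $u,w$) is precisely what the paper compresses into the single phrase ``since $H$ is $C_4$-free and $v$ is a semi-branching vertex, then $|F_2|,|F_3|\ge 5$.''
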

\begin{proof}
Let $v'\in N(v)$ and $F_1,\ F_2$ and $F_3$ as the faces incident to the paths $(u,\ v,\ w),\ (u,\ v,\ v')$ and $(v',\ v,\ w)$ respectively. By assumption $|F_1|\geq 6$. Since $H$ is $C_4$-free and $v$ is a semi-branching vertex, then $|F_2|, \ |F_3|\geq 5$. Denote $u'$ and $w'$ as the leaves in $T$ such that $v'u'$ is incident to $F_2$ and $v'w'$ is incident to the face $F_3$. Let $H'$ be a graph obtained from $H$ by deleting $v$ and adding the edges $uw$ and $u'w'$. It can be checked that $H'$ is an $(n-2)$-vertex Halin graphs, with the two faces incident to $uw$ with size at least $5$ and at least $6$, and hence by Lemma~\ref{kjh} $H'$ contains no $4$-cycle and $e(H)=e(H')+3$. 
\end{proof}
\begin{lemma}\label{l22}
Let $H$ be an $n$-vertex  $C_4$-free Halin graph with characteristic tree $T$. Let $e\in E(T)$ such that its end vertices are non-leaf in $T$. If the two faces incident to $e$ are with size at least $6$, then there is an $(n-1)$-vertex $C_4$-free Halin graph, $H'$, such that $e(H)=e(H')+1.$ 
\end{lemma}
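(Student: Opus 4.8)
The plan is to obtain $H'$ by \emph{contracting} the edge $e=uv$ in the characteristic tree $T$, that is, by identifying $u$ and $v$ into a single vertex $z$ while leaving the outer cycle $C$ unchanged. Since neither $u$ nor $v$ is a leaf, the set of leaves of $T$ is unaffected, so $C$ still joins exactly the leaves of the contracted tree $T'=T/e$, and the number of vertices drops from $n$ to $n-1$.

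First I would verify that $H'=T'\cup C$ is a genuine Halin graph. Contracting an edge of a tree yields a tree, and since $u$ and $v$ have no common neighbor in $T$ (a tree contains no triangle), no multiple edges arise, so $T'$ is a simple tree. The merged vertex $z$ has degree $d_T(u)+d_T(v)-2\geq 4\geq 3$, while every other vertex keeps its degree, so every non-leaf of $T'$ has degree at least $3$. Edge contraction preserves planarity, and because $C$ avoids both $u$ and $v$, the plane embedding and the outer cycle carry over; hence $H'$ is a Halin graph. For the edge count, the contraction destroys only $e$ and creates no new edge, and $C$ is untouched, so $e(H')=e(H)-1$, i.e.\ $e(H)=e(H')+1$.

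The substance of the argument is showing that $H'$ is $C_4$-free. Any $4$-cycle of $H'$ avoiding $z$ is already a $4$-cycle of $H$, which is impossible. So suppose $H'$ contains a $4$-cycle through $z$, say $(z,\ a,\ b,\ c,\ z)$ with $a,b,c$ distinct and different from $u,v$. The two cycle-edges meeting $z$ lift to edges of $H$ incident to $u$ or to $v$. If both lift to edges at $u$ (respectively at $v$), then $(u,\ a,\ b,\ c,\ u)$ (respectively $(v,\ a,\ b,\ c,\ v)$) is a $4$-cycle of $H$, contradicting $C_4$-freeness. The only remaining possibility is that one edge lifts to $ua$ and the other to $vc$; then, using $e=uv$, the graph $H$ contains the $5$-cycle $(u,\ a,\ b,\ c,\ v,\ u)$, which passes through $e$.

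This is precisely where the face-size hypothesis enters: since both faces incident to $e$ have size at least $6$, Lemma~\ref{l21} forces every cycle of $H$ through $e$ to have length at least $\min\{|F_1|,\ |F_2|\}\geq 6$, which contradicts the existence of such a $5$-cycle. Hence no new $4$-cycle appears and $H'$ is $C_4$-free, completing the proof. The step demanding the most care is this final case analysis: correctly tracking how the two edges of a putative new $4$-cycle lift across the contraction, and recognizing that the sole dangerous configuration is a $5$-cycle through $e$, which the hypothesis together with Lemma~\ref{l21} excludes.
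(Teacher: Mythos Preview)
Your proof is correct and follows essentially the same approach as the paper: contract the tree edge $e$ to obtain $H'=T/e\cup C$, check that $H'$ is a Halin graph on $n-1$ vertices with one fewer edge, and use Lemma~\ref{l21} to rule out new $4$-cycles. Your case analysis on how a putative $4$-cycle through the merged vertex lifts back to $H$ is more explicit than the paper's, which simply notes that every cycle of $H$ through $e$ has length at least $6$ and hence shrinks to a cycle of length at least $5$ after contraction; both arguments amount to the same thing.
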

\begin{proof}
Denote $H=T\cup C$, where $C$ is the outer cycle of $H$. Let $e=vu$ and $F_1$ and $F_2$ be the two bounded faces in $H$ incident to $e$. $u$ and $v$ by assumption are non-leaf, and hence $d_T(u)\geq 3$ and $d_T(v)\geq 3$. Let $T'$  be the graph obtained after contracting $e$ in $T$. Clearly, $T'$ an $(n-1)$-vertex tree and a leaf in $T'$ is a leaf in $T$. Moreover, for each non-leaf vertex $w\in V(T'),\ d_{T'}(w)\geq 3$. Therefore by contracting $e$ in $H$ we get a Halin graph $H'=T'\cup C$. 

Since $|F_1|,\ |F_2|\geq 6$, then by Lemma~\ref{l21}, for each cycle $\mathcal{C}$ containing $e$ we have $|\mathcal{C}|\geq 6$. Hence, by contracting $e$, every cycle in $H'$ is with no $4$-cycle. Therefor, $H'$ is an $(n-1)$-vertex $C_4$-free Halin graph. This completes the proof of Lemma~\ref{l22}. 
\end{proof}
\begin{lemma}
For $n\geq 16$, we have  
\begin{equation*}\ex_{\hh}(n,C_4)\geq 
\begin{cases}
\frac{5}{3}(n-1), & 3|(n-1),\\
\frac{5}{3}(n-2)+1, & 3|(n-2),\\
\frac{5}{3}(n-3)+3, & 3|(n-3).
\end{cases}
\end{equation*}
\end{lemma}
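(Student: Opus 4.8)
The plan is to prove this lower bound by exhibiting, for each residue of $n$ modulo $3$, an explicit $n$-vertex $C_4$-free Halin graph attaining the claimed number of edges. The key simplification is that a Halin graph $H=T\cup C$ on $n$ vertices whose tree $T$ has $\ell$ leaves satisfies $e(H)=(n-1)+\ell$, so the whole task reduces to building a $C_4$-free Halin graph with as many leaves as possible. Moreover, by Lemmas~\ref{kjh} and~\ref{l21} every cycle through a given edge has length at least the size of a face incident to that edge; hence it suffices to make sure that each bounded face is either a triangle (which contains no $4$-cycle) or has size at least $5$. Throughout, the only verification I would need to carry out is this local face check.

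The base construction, handling $3\mid(n-1)$, is a ``sunflower.'' Take one interior vertex $z$ joined to $t$ branching vertices $b_1,\dots,b_t$, each carrying exactly two leaves, and let the outer cycle visit the $2t$ leaves in rotational order $\ell_1^1,\ell_1^2,\ell_2^1,\ell_2^2,\dots$. Then each $b_i$ with its two consecutive leaves bounds a triangle, and each consecutive pair $b_i,b_{i+1}$ bounds the pentagon on $z,b_i,\ell_i^2,\ell_{i+1}^1,b_{i+1}$; no other short cycle exists, since the $b_i$ are pairwise non-adjacent and no leaf is adjacent to $z$. Here $n=3t+1$ and $\ell=2t$, so $e=5t=\tfrac53(n-1)$. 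For $3\mid(n-2)$ I would split the centre into two adjacent interior vertices $z_1,z_2$ and distribute the branching spokes between them (at least two on each side). All triangles and the pentagons between consecutive same-centre spokes survive, while the two ``transition'' regions across the edge $z_1z_2$ become hexagons; every bounded face thus has size $3$ or $\geq 5$, giving $C_4$-freeness. Now $n=3t+2$, $\ell=2t$, and $e=5t+1=\tfrac53(n-2)+1$.

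The remaining case $3\mid(n-3)$ is the delicate one, and I expect it to be the main obstacle. The target forces $\ell=\tfrac{2n-3}{3}$, which is \emph{odd}, so the graph must contain a vertex carrying a single leaf. This is exactly the dangerous configuration: a one-leaf vertex adjacent to a branching vertex always closes a $4$-face, because its leaf becomes cycle-adjacent to the branching vertex's leaf. The plan is therefore to introduce the single leaf only in a \emph{shielded} position, mirroring the reverse of the reduction in Lemma~\ref{rfr}: insert a degree-$3$ semi-branching vertex $v$ bearing one leaf $m$ on an edge joining two interior vertices, so that both non-leaf neighbours of $v$ are interior and $m$ is placed inside a face of size at least $6$. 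Subdividing such a large face by $m$ yields two pentagons instead of a quadrilateral, so no $C_4$ is created, and the gadget contributes exactly two vertices and three edges (one of them the new leaf), which is the correct increment to pass from a core of the previous type to $\tfrac53(n-3)+3$.

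The hard part, and where I expect most of the effort to go, is making the counts and the geometry cooperate simultaneously in this last case. One must arrange the core so that a face of size $\geq 6$ is actually available at the right place (a plain sunflower has only triangles and pentagons, so the core itself must be adjusted to expose a hexagonal region), and then verify by the face criterion above that the inserted single-leaf vertex is genuinely shielded on both sides, so that every new bounded face has size $3$ or $\geq 5$. This asymmetric gadget is precisely the spot where a careless placement reintroduces a $4$-cycle, so the careful face-by-face analysis around it is the crux of the lower bound; once it is in place, the edge count $e=(n-1)+\ell$ gives the stated value and completes all three cases.
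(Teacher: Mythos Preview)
Your sunflower and two-centre constructions for $3\mid(n-1)$ and $3\mid(n-2)$ are exactly the paper's families $H_{16}^n$ and $H_{17}^n$. The case $3\mid(n-3)$, however, has a genuine gap. Your one-leaf gadget adds two vertices and three edges, so to reach $\tfrac{5}{3}(n-3)+3$ the core must be an \emph{extremal} $C_4$-free Halin graph on $n'=n-2\equiv 1\pmod 3$ vertices that also contains an edge joining two interior vertices. No such core exists. Your only residue-$1$ construction is the sunflower, which has a single interior vertex; the two-centre graph has the required edge but the wrong residue (the gadget then produces $n\equiv 1$, not $0$); and lengthening the chain of interior vertices drops the edge count strictly below the extremal value. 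In fact, in any $C_4$-free Halin graph an interior--interior edge is incident to two faces of size at least $6$ (the tree path bounding such a face must pass through a non-leaf neighbour of each interior endpoint before reaching a leaf), so Lemma~\ref{l22} applies, and contracting that edge shows that a residue-$1$ graph containing one has at most $\tfrac{5}{3}(n'-1)-1$ edges. The ``adjustment'' you allude to therefore cannot be carried out within this scheme.

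The paper's construction for $3\mid(n-3)$ is different in kind: the base tree $T_{18}$ places between two interior centres a semi-branching vertex carrying \emph{three} leaves (two on one side of the central path, one on the other), and further $K_{1,3}$ spokes are then attached at the two centres to reach every $n\equiv 0\pmod 3$. Three leaves give the correct residue shift, and this placement splits the transition hexagons of the two-centre graph into faces of size $5$ or more, so no quadrilateral appears. A minor side remark on your verification strategy: Lemmas~\ref{kjh} and~\ref{l21} do not cover tree edges incident to a leaf, so ``every bounded face has size $3$ or $\ge 5$'' is not by itself a certificate of $C_4$-freeness (two triangular faces sharing such an edge produce a $C_4$); your constructions do avoid this, but the check should be made explicit.
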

\begin{proof} We give extremal constructions to verify the bounds. First, we give constructions of the characteristic tree of the Halin graph, when $n=16,\ 17$, and $n=18$. For the sake of simplicity, we may call the trees as \textit{base-tree} and denote them by $T_{16},\ T_{17}$ and $T_{18}$. Denote also the corresponding Halin graphs by $H_{16},\ H_{17}$ and $H_{18}$ respectively. It is easy to see the Halin graphs are $C_4$-free.    
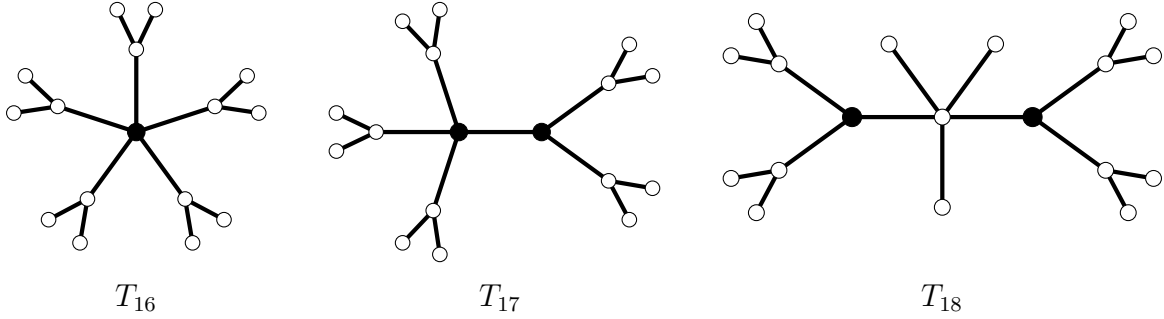
\begin{figure}[h]
\centering
\begin{tikzpicture}[scale=0.11]
\draw[ultra thick](0,0)--(0,10)(0,0)--(9.5,3.1)(0,0)--(-9.5,3.1)(0,0)--(5.9,-8.1)(0,0)--(-5.9,-8.1)(9.5,3.1)--(14.8,2.3)(9.5,3.1)--(13.4,6.8) (-9.5,3.1)--(-14.8,2.3)(-9.5,3.1)--(-13.4,6.8)(0,10)--(2.3,14.8)(0,10)--(-2.3,14.8)(5.9,-8.1)--(10.6,-10.6)(5.9,-8.1)--(6.8,-13.4)(-5.9,-8.1)--(-10.6,-10.6)(-5.9,-8.1)--(-6.8,-13.4);
\draw[fill=black](0,0)circle(30pt);
\draw[fill=white](0,10)circle(25pt);
\draw[fill=white](9.5,3.1)circle(25pt);
\draw[fill=white](-9.5,3.1)circle(25pt);
\draw[fill=white](5.9,-8.1)circle(25pt);
\draw[fill=white](-5.9,-8.1)circle(25pt);
\draw[fill=white](14.8,2.3)circle(25pt);
\draw[fill=white](13.4,6.8)circle(25pt);
\draw[fill=white](-14.8,2.3)circle(25pt);
\draw[fill=white](-13.4,6.8)circle(25pt);
\draw[fill=white](2.3,14.8)circle(25pt);
\draw[fill=white](-2.3,14.8)circle(25pt);
\draw[fill=white](10.6,-10.6)circle(25pt);
\draw[fill=white](6.8,-13.4)circle(25pt);
\draw[fill=white](-10.6,-10.6)circle(25pt);
\draw[fill=white](-6.8,-13.4)circle(25pt);
\node at (0,-20) {$T_{16}$};
\end{tikzpicture}\quad\quad
\begin{tikzpicture}[scale=0.11]
\draw[ultra thick](-5,0)--(5,0)--(13.1,5.9)--(18.4,6.8)(13.1,5.9)--(15.6,10.6) (5,0)--(13.1,-5.9)--(18.4,-6.8)(13.1,-5.9)--(15.6,-10.6)(-5,0)--(-8.1,9.5)--(-11.8,13.4)(-8.1,9.5)--(-7.3,14.8)(-5,0)--(-8.1,-9.5)--(-11.8,-13.4)(-8.1,-9.5)--(-7.3,-14.8)(-5,0)--(-15,0)--(-19.8,2.3)(-15,0)--(-19.8,-2.3);
\draw[fill=black](-5,0)circle(30pt);
\draw[fill=black](5,0)circle(30pt);
\draw[fill=white](13.1,5.9)circle(25pt);
\draw[fill=white](13.1,-5.9)circle(25pt);
\draw[fill=white](-8.1,9.5)circle(25pt);
\draw[fill=white](-8.1,-9.5)circle(25pt);
\draw[fill=white](-15,0)circle(25pt);
\draw[fill=white](18.4,6.8)circle(25pt);
\draw[fill=white](15.6,10.6)circle(25pt);
\draw[fill=white](18.4,-6.8)circle(25pt);
\draw[fill=white](15.6,-10.6)circle(25pt);
\draw[fill=white](-11.8,13.4)circle(25pt);
\draw[fill=white](-7.3,14.8)circle(25pt);
\draw[fill=white](-11.8,-13.4)circle(25pt);
\draw[fill=white](-7.3,-14.8)circle(25pt);
\draw[fill=white](-19.8,2.3)circle(25pt);
\draw[fill=white](-19.8,-2.3)circle(25pt);
\node at (0,-20) {$T_{17}$};
\end{tikzpicture}\quad\quad
\begin{tikzpicture}[scale=0.12]
\draw[ultra thick](-5,0)--(5,0)--(13.1,5.9)--(18.4,6.8)(13.1,5.9)--(15.6,10.6) (5,0)--(13.1,-5.9)--(18.4,-6.8)(13.1,-5.9)--(15.6,-10.6) (-23.1,5.9)--(-28.4,6.8)(-23.1,5.9)--(-25.6,10.6) (-23.1,-5.9)--(-25.6,-10.6)(-23.1,-5.9)--(-28.4,-6.8)(-5,0)--(-15,0)--(-23.1,5.9)(-15,0)--(-23.1,-5.9)(-5,0)--(0.9,8.1)(-5,0)--(-10.9,8.1)(-5,0)--(-5,-10);
\draw[fill=white](-5,-10)circle(25pt);
\draw[fill=black](-15,0)circle(30pt);
\draw[fill=white](-5,0)circle(25pt);
\draw[fill=black](5,0)circle(30pt);
\draw[fill=white](13.1,5.9)circle(25pt);
\draw[fill=white](13.1,-5.9)circle(25pt);
\draw[fill=white](18.4,6.8)circle(25pt);
\draw[fill=white](15.6,10.6)circle(25pt);
\draw[fill=white](18.4,-6.8)circle(25pt);
\draw[fill=white](15.6,-10.6)circle(25pt);
\draw[fill=white](-23.1,5.9)circle(25pt);
\draw[fill=white](-23.1,-5.9)circle(25pt);
\draw[fill=white](-28.4,6.8)circle(25pt);
\draw[fill=white](-25.6,10.6)circle(25pt);
\draw[fill=white](-28.4,-6.8)circle(25pt);
\draw[fill=white](-25.6,-10.6)circle(25pt);
\draw[fill=white](0.9,8.1)circle(25pt);
\draw[fill=white](-10.9,8.1)circle(25pt);
\node at (-5,-20) {$T_{18}$};
\end{tikzpicture}
\caption{Characteristic trees of Halin graphs on 16, 17, and 18 vertices}
\label{f1}
\end{figure}

Now let $n\geq 19$. We define an $n$-vertex Halin graphs $H_{16}^n,\ H_{17}^n$ and $H_{18}^n$ based on the base-trees $T_{16}, \ T_{17}$ and $T_{18}$ as follows. The star $K_{1,3}$, which is shown in Figure~\ref{f90}, is an important component in describing the constructions. For simplicity reasons, we call it \textit{star}. Notice the dark-spotted vertices in both the base-trees and the star.   

For  $n\equiv 0~(\mod \ 3)$, the Halin graph $H_{18}^n$ is obtained by having $\frac{n-18}{3}$ copies of the star and identifying any of the dark-spotted vertices of $T_{18}$ and the dark-spotted vertex of the star.  Similarly, when $n\equiv 1~(\mod \ 3)$ and $n\equiv 2~(\mod \ 3)$, we respectively get $H_{16}^n$ and $H_{17}^n$ by having $\frac{n-16}{3}$ and $\frac{n-17}{3}$  copies of the star and identifying the dark-spotted vertices of the corresponding base-trees and the star.

It is easy to see that the Halin graphs, $H_{16}^n,\ H_{17}^n$ and $H_{18}^n$ are $C_4$-free. Moreover it is easy to calculate and check that $e(H_{16}^n)=\frac{5}{3}(n-1), \ e(H_{17}^n)=\frac{5}{3}(n-2)+1$ and $e(H_{18}^n)=\frac{5}{3}(n-3)+3$. Therefore, for $n\equiv 0~(\mod \ 3)$,\ $n\equiv 1~(\mod \ 3)$  and $n\equiv 2~(\mod \ 3)$, we have $\ex_\hh(n,C_4)\geq e(H_{18}^n), \ \ex_\hh(n,C_4)\geq e(H_{16}^n) $, and $\ex_\hh(n,C_4)\geq e(H_{17}^n)$ respectively. 
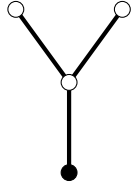
\begin{figure}[h]
\centering
\begin{tikzpicture}[scale=0.12]
\draw[ultra thick](-5,0)--(-10.9,8.1)(0.9,8.1)--(-5,0)--(-5,-10);
\draw[fill=black](-5,-10)circle(25pt);
\draw[fill=white](-5,0)circle(25pt);
\draw[fill=white](0.9,8.1)circle(25pt);
\draw[fill=white](-10.9,8.1)circle(25pt);
\end{tikzpicture}
\caption{Star $K_{1,3}$}
\label{f90}
\end{figure}
\end{proof}
\begin{observation}\label{ob1}
\emph{ Let $H$ be an $n$-vertex $C_4$-fee Halin graph with a characteristic tree $T$. Let $L=(v_0,\ v_1,\ v_2,\ \dots,\ v_{k-2},\ v_{k-1},\ v_k)$ be a longest path in $T$.} \emph{From Lemma~\ref{l1}, both $v_1$ and $v_{k-1}$ are branching vertices, and each of them is adjacent to two leaves. Denote the leaf, other than $v_0$, adjacent to $v_1$ by $v_0'$. Denote also the leaf, other than $v_k$, adjacent to $v_{k-1}$ by $v_k'$. Since each non-leaf in $T$ is with a degree at least $3$, there must be a vertex, say $u$, adjacent to $v_2$ such that either both $v_0v_1$ and $v_2u$ or both $v_1v_0'$ and $v_2u$ are incidents to the same bounded face in $H$. Without loss of generality assume the latter case holds. It can be seen that $u$ can not be a leaf in $T$. Otherwise, $(v_1,\ v_2,\ u,\ v_0',\ v_1)$ is a $4$-cycle in $H$, which is a contradiction. Hence $u$ is non-leaf in the characteristic tree. Therefore, $d_T(u)\geq 3$. From the assumption that $L$ is of maximum length in $T$, $u$ is adjacent to exactly two leaves, and say $u_1$ and $u_2$. For a similar argument, $v_{k-2}$ is adjacent to a non-leaf $w$, which is adjacent to two leaves $w_1$ and $w_2$, see Figure~\ref{f2}.}
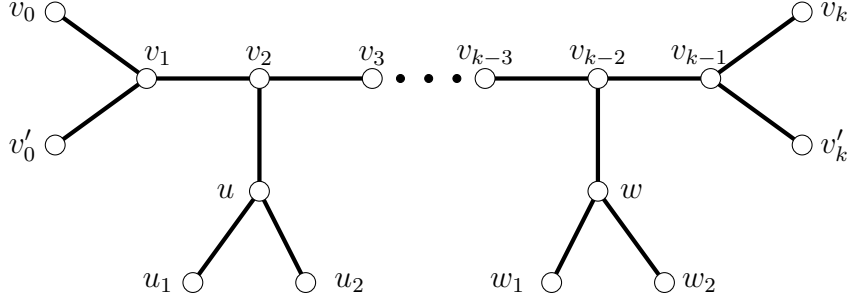
\begin{figure}[h]
\centering
\begin{tikzpicture}[scale=0.15]
\draw[ultra thick](5,0)--(15,0)--(25,0)--(33.1,5.9)(25,0)--(33.1,-5.9)(15,-10)--(20.9,-18.1)(15,-10)--(10.9,-18.1) (-5,0)--(-15,0)--(-25,0)--(-33.1,5.9)(-25,0)--(-33.1,-5.9)(-15,-10)--(-20.9,-18.1)(-15,-10)--(-10.9,-18.1)(-15,0)--(-15,-10)(15,0)--(15,-10);
\draw[fill=white](-15,0)circle(25pt);
\draw[fill=white](15,0)circle(25pt);
\draw[fill=white](-5,0)circle(25pt);
\draw[fill=white](5,0)circle(25pt);
\draw[fill=white](-25,0)circle(25pt);
\draw[fill=white](-33.1,5.9)circle(25pt);
\draw[fill=white](-33.1,-5.9)circle(25pt);
\draw[fill=black](-2.5,0)circle(10pt);
\draw[fill=black](2.5,0)circle(10pt);
\draw[fill=black](0,0)circle(10pt);
\draw[fill=white](25,0)circle(25pt);
\draw[fill=white](33.1,5.9)circle(25pt);
\draw[fill=white](33.1,-5.9)circle(25pt);
\draw[fill=white](20.9,-18.1)circle(25pt);
\draw[fill=white](10.9,-18.1)circle(25pt);
\draw[fill=white](-20.9,-18.1)circle(25pt);
\draw[fill=white](-10.9,-18.1)circle(25pt);
\draw[fill=white](15,-10)circle(25pt);
\draw[fill=white](-15,-10)circle(25pt);
\node at (36,5.9) {$v_k$};
\node at (36,-5.9) {$v_k'$};
\node at (24,2) {$v_{k-1}$};
\node at (15,2) {$v_{k-2}$};
\node at (5,2) {$v_{k-3}$};
\node at (18,-10) {$w$};
\node at (24,-18.1) {$w_2$};
\node at (7,-18.1) {$w_1$};
\node at (-36,5.9) {$v_0$};
\node at (-36,-5.9) {$v_0'$};
\node at (-24,2) {$v_{1}$};
\node at (-15,2) {$v_{2}$};
\node at (-5,2) {$v_{3}$};
\node at (-18,-10) {$u$};
\node at (-24,-18.1) {$u_1$};
\node at (-7,-18.1) {$u_2$};
\end{tikzpicture}
\caption{Distribution of vertices on a longest path of a Halin graph}
\label{f2}
\end{figure}
\emph{When $k\geq5$, $S=\{v_0,v_0',v_1,v_2,u,u_1,u_2,v_k,v_k',v_{k-1},v_{k-2},w,w_1,w_2\}$ gives the $14$ labeled vertices in $T$. If $k=4$, $v_2$ and $v_{k-2}$ are identical vertices and the stars attached at $v_2$ and $v_{k-2}$ could be identical.}
\end{observation}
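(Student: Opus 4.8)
The plan is to build the local structure at both ends of $L$ by repeatedly combining three ingredients: Lemma~\ref{l1}, the $C_4$-freeness of $H$, and the maximality of $L$. First I would apply Lemma~\ref{l1} to $L$, which immediately gives that $v_1$ and $v_{k-1}$ are branching vertices each adjacent to exactly two leaves; writing $v_0'$ for the second leaf at $v_1$ and $v_k'$ for the second leaf at $v_{k-1}$ produces the four outermost labeled vertices and, since these vertices have degree $3$ in $T$, fixes $N_T(v_1)=\{v_0,v_0',v_2\}$ and $N_T(v_{k-1})=\{v_k,v_k',v_{k-2}\}$.

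The core of the argument is to locate the branch vertex $u$ at $v_2$ and show it behaves like a semi-pendant vertex. I would first extract $u$ from the planar embedding, which is essentially unique since $H$ is $3$-connected. Because $N_T(v_1)=\{v_0,v_0',v_2\}$, the two bounded faces incident to the tree edge $v_1v_2$ carry the leaf edges $v_1v_0$ and $v_1v_0'$ respectively. At $v_2$ the edge $v_1v_2$ is flanked in the rotation by two neighbors of $v_2$, at most one of which equals $v_3$; hence one of these two faces, say the one also containing $v_1v_0'$, has its next edge at $v_2$ equal to some neighbor $u\neq v_3$. This yields a vertex $u$ adjacent to $v_2$ with $v_1v_0'$ and $v_2u$ on a common bounded face $F$, as in the statement.

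Next I would pin down $u$. It cannot be a leaf: otherwise the boundary of $F$ would be the $4$-cycle $(v_1,v_2,u,v_0',v_1)$, whose closing edge $uv_0'$ lies on the outer cycle, contradicting $C_4$-freeness; thus $d_T(u)\geq 3$. To see that every neighbor of $u$ other than $v_2$ is a leaf I would invoke maximality of $L$: if $u$ had a non-leaf neighbor $x\neq v_2$, then $x$ would have a leaf descendant $\ell$ in the subtree hanging off $u$ away from $v_2$, and the path from $\ell$ up through $x$ and $u$ to $v_2$, then along $L$ to $v_k$, is simple (the two sides of the edge $uv_2$ are disjoint in $T$) and has length at least $1+1+1+(k-2)=k+1$, contradicting that $L$ has length $k$. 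Hence all neighbors of $u$ except $v_2$ are leaves, so $u$ has at least two leaves; a third leaf would give three consecutive leaves on $C$ and hence a $4$-cycle centered at $u$, so $u$ has exactly two leaves $u_1,u_2$.

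Finally I would run the mirror image at the far end of $L$ to obtain a non-leaf branch vertex $w$ at $v_{k-2}$ adjacent to exactly two leaves $w_1,w_2$, and then collect the fourteen labeled vertices into $S$; the only remaining point is the degeneracy $k=4$, where $v_2=v_{k-2}$ forces $u$ and $w$ to be branches at the same vertex, coinciding precisely when $d_T(v_2)=3$, which I would simply record. I expect the main obstacle to be the planarity bookkeeping used to select $u$ in the second paragraph, i.e.\ arguing cleanly that a branch neighbor $u\neq v_3$ lying on a common face with a leaf of $v_1$ must exist; once $u$ is in hand, the $C_4$-argument and the longest-path extension are routine.
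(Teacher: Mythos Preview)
Your proposal is correct and follows essentially the same route as the paper's own justification embedded in the observation: invoke Lemma~\ref{l1} at the ends, use the planar incidence at $v_2$ to pick the side branch $u$, rule out $u$ being a leaf via the $4$-cycle $(v_1,v_2,u,v_0',v_1)$, and use maximality of $L$ together with the three-consecutive-leaves $C_4$ to force $u$ to have exactly two leaf neighbors, then mirror at $v_{k-2}$. You are in fact more careful than the paper in two places---explicitly securing $u\neq v_3$ via the rotation at $v_2$, and spelling out the longer-path contradiction---but the underlying argument is the same.
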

\begin{lemma}\label{fbv}
Let $H$ be an $n$-vertex $C_4$-free Halin graph, where $n\geq 19$. Then 
\begin{equation*}\ex_{\hh}(n,C_4)\leq 
\begin{cases}
\frac{5}{3}(n-1), & 3|(n-1),\\
\frac{5}{3}(n-2)+1, & 3|(n-2),\\
\frac{5}{3}(n-3)+3, & 3|(n-3).
\end{cases}
\end{equation*}
\end{lemma}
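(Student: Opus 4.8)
The plan is to prove Lemma~\ref{fbv} by strong induction on $n$, feeding any $C_4$-free Halin graph $H$ into one of the three reduction lemmas (Lemmas~\ref{mainlemma1}, \ref{rfr}, \ref{l22}) to pass to a smaller $C_4$-free Halin graph $H'$ and then invoking the inductive hypothesis; the base cases are $n=16,17,18$, handled directly. The arithmetic backbone is the identity $e(H)=(n-1)+\ell$, where $\ell$ is the number of leaves of the characteristic tree $T$, together with the behaviour of the target function $f(n)$ on the right-hand side: one checks $f(n)-f(n-3)=5$ for all $n$, while $f(n)-f(n-1)=2$ for $n\equiv 0,1$ and $=1$ for $n\equiv 2\pmod 3$, and $f(n)-f(n-2)=4$ for $n\equiv 1$ and $=3$ for $n\equiv 0,2\pmod 3$. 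Calling a reduction that saves $s$ edges while deleting down to $m$ vertices \emph{admissible} when $s\le f(n)-f(m)$, these differences show that Lemma~\ref{l22} ($s=1,\ m=n-1$) and Lemma~\ref{rfr} ($s=3,\ m=n-2$) are admissible in every residue class, whereas Lemma~\ref{mainlemma1} ($s=2,\ m=n-1$) is admissible only when $n\equiv 0,1\pmod 3$.

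For the inductive step I would fix a longest path $L=(v_0,\dots,v_k)$ in $T$ and import the local picture of Observation~\ref{ob1}: the semi-pendant vertices $v_1,v_{k-1}$ are branching vertices with exactly two leaves, and $v_2,v_{k-2}$ have non-leaf neighbours $u,w$ that are themselves branching vertices with two leaves. I then search for an admissible reduction in priority order. If $n\equiv 0,1\pmod 3$ and $T$ has a semi-branching vertex of degree $\ge 4$, apply Lemma~\ref{mainlemma1}. Otherwise, inspect the bounded faces incident to the non-leaf edges $v_2u$, $v_{k-2}w$ and to the paths through any degree-$3$ semi-branching vertex: if one of these faces has size $\ge 6$, apply Lemma~\ref{l22} or Lemma~\ref{rfr} accordingly (using Lemmas~\ref{kjh} and \ref{l21} to certify that the reduced graph stays $C_4$-free). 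In each instance the inductive hypothesis on $H'$ plus admissibility gives $e(H)\le f(n)$.

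The genuinely delicate situation is the \emph{irreducible} one, where no admissible reduction is available: every non-leaf–non-leaf edge has at most one incident face of size $\ge 6$, no path through a degree-$3$ semi-branching vertex meets a face of size $\ge 6$, and (when $n\equiv 0,1$) there is no semi-branching vertex of degree $\ge 4$. Here I would count directly using $\sum_{\text{bounded }F}|F|=2n-2+\ell$ over the $\ell$ bounded faces (recall $C_4$-freeness forbids size $4$). Writing $n_3,n_5,n_{\ge 6}$ for the numbers of triangular, pentagonal, and larger faces, this yields $2n_3+4n_5+5n_{\ge6}\le 2n-2$, so the bound $\ell=n_3+n_5+n_{\ge6}\le\tfrac{2}{3}(n-1)$ hinges on showing that triangles must be ``paid for'' by pentagons, i.e. a linking inequality of the shape $n_3\le n_5+n_{\ge6}+O(1)$. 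Since every triangle is a branching vertex with two consecutive leaf-children whose parent edge borders two larger faces, I expect this inequality to follow by charging each triangle to a neighbouring pentagon along the longest-path region, pinning the irreducible graphs down to the extremal family $H_{16}^n,H_{17}^n,H_{18}^n$ and giving $e(H)=f(n)$ exactly.

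I expect the main obstacle to be the residue class $n\equiv 2\pmod 3$ combined with irreducibility. There Lemma~\ref{mainlemma1} overshoots the target by one edge, so a semi-branching vertex of degree $\ge 4$ cannot simply be removed; I must either show that such a vertex always forces an incident face of size $\ge 6$ (reopening Lemma~\ref{rfr} or \ref{l22}), or absorb it into the direct face count, where the leaves hanging at a high-degree semi-branching vertex should strictly depress $\ell$ below $\tfrac{2}{3}(n-2)$. The secondary obstacle is the exhaustiveness of the irreducible case analysis: verifying that the local triangle/pentagon alternation near the ends of a longest path genuinely propagates across the whole graph and leaves no sporadic edge-heavier configuration, which is exactly where planarity and the shortest-cycle control of Lemmas~\ref{kjh} and \ref{l21} must be used with care.
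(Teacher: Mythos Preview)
Your inductive framework, the base cases $n\in\{16,17,18\}$, and the admissibility calculus for the three reduction lemmas are all correct and match the paper exactly. The divergence is in how the hard residue class is actually closed.

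First, a small structural gap: when the longest path has length $k\le 5$ none of the reduction lemmas need apply, and the paper does not reduce at all. Instead it shows (Claims~\ref{clm1} and \ref{cdc1}) that the structure of $T$ is forced---every neighbour of $v_2$ (resp.\ $v_2,v_3$) must be a branching vertex with two leaves---so $H$ is one of the extremal graphs and $e(H)=f(n)$ on the nose; in particular $k=4$ forces $3\mid(n-1)$ and $k=5$ forces $3\mid(n-2)$. Your write-up folds these into the ``irreducible'' case without saying so.

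The substantive gap is the $n\equiv 2\pmod 3$, $k\ge 6$ situation. The paper does \emph{not} use face-counting here. It first shows (Claim~\ref{vfg}) that for $k\ge 6$ one of the three reductions is always available at $v_3$; the only obstruction is that the available one may be Lemma~\ref{mainlemma1}, which is inadmissible for $n\equiv 2$. The fix is a \emph{surgical rearrangement}: look at $v_{k-3}$. If $d_T(v_{k-3})=3$, a direct application of Lemma~\ref{rfr} or \ref{l22} (both admissible) is found near $v_{k-3}v_{k-2}$. If $d_T(v_{k-3})\ge 4$, the paper deletes the three edges $v_{k-3}v_{k-2},\,v_1^uw_1^u,\,v_1^lw_1^l$, \emph{transplants} the entire component containing $v_{k-2}$ into the face $F_3$ at the other end of the longest path, and reconnects it to $v_2$. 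This produces an $n$-vertex Halin graph $H'$ with $e(H')=e(H)$ in which the new edge $v_2v_{k-2}$ has both incident faces of size $\ge 6$, so Lemma~\ref{l22} applies to $H'$. A further three-case analysis disposes of the possibility that the transplant created a $C_4$ through $v_1^uv_1^l$. Nothing like this surgery appears in your plan.

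Your alternative---the charging inequality $n_3\le n_5+2n_{\ge 6}$ (which, with integrality, would indeed give $\ell\le\lfloor\tfrac{2}{3}(n-1)\rfloor=\tfrac{2}{3}(n-2)$ when $n\equiv 2$)---is a genuinely different route and, if it works, would be cleaner than the paper's transplant-and-case-check. But you have not proved it: ``each triangle borders two $\ge 5$-faces'' is true, yet a single pentagon can border several triangles, so the naive charge is not injective, and you give no discharging rule that balances the books globally. Until that inequality is established your argument is incomplete precisely at the point you flagged as the main obstacle.
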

\begin{proof}
 Our proof relies on induction on the number of vertices. The base cases are shown in the upcoming section. Let $L=(v_0,\ v_1,\ v_2,\ \dots,\ v_k)$ be a longest path in $T$. It is easy to check that $k\geq 4$. Next, we prove the following sequence of lemmas as part of the proof. 
\begin{claim}\label{clm1}
If $k=4$, then $3|(n-1)$ and $e(H)=\frac{5}{3}(n-1).$
\end{claim}
\begin{proof}
From observation~\ref{ob1}, $S=\{v_0,v_0',v_1,v_2,u,u_1,u_2,v_3,v_4,v_4'\}$. For each vertex $v\in V(T)\backslash S$ and is incident to $L$, $v\in N(v_2)$. Moreover, $v$ is not a leaf in $T$. Indeed, suppose for contradiction $v$ is a leaf. Since $L$ is the longest path, then the two faces incident to the edge $v_2v$ are with size either $3$ or $4$. The latter, can not happen as $H$ is $C_4$-free. Thus we may assume both faces are with size three. Hence we get two triangles sharing the same edge $v_2v$. However, this also results in a $4$-cycle, which is a contradiction. Hence, each vertex in $T$ adjacent to $v_2$ is a non-leaf. Since $L$ is a longest path, each vertex adjacent to $v_2$ is a branching vertex. That means the vertex is adjacent to two pendant vertices. Therefore, $H$ is obtained by identifying the dark-spotted vertex of $\frac{n-7}{3}$ copies of stars with $v_2$. It can be checked that $e(H)=\frac{5}{3}(n-1)$, and this completes the proof of Claim~\ref{clm1}.
\end{proof}
\begin{claim}\label{cdc1}
If $k=5$, then $3|(n-2)$ and $e(H)=\frac{5}{3}(n-2)+1.$
\end{claim}
\begin{proof}
From Observation~\ref{ob1}, $S=\{v_0,v_0',v_1,v_2,u,u_1,u_2,v_3,w,w_1,w_2,v_4,v_5,v_5'\}$. We verify that each vertex $v\in V(T)\backslash S$ incident to $v_2$ or $v_3$ is a branching vertex. Indeed, without loss of generality assume $v\in N(v_2)$. 
Since $L$ is a longest path in $T$, the faces incident to $v_2v$ and located on its left side must be either a $3$-face or a $4$-face. The latter can not happen, as $H$ is a $C_4$-free graph. Thus we may assume the face is a $3$-face and let the leaf forming the $3$-face be $v'$, i.e., the $3$-face is $(v_2,\ v,\ v',\ v_2)$. For the same reason, we have a leaf $v''$ adjacent to $v_2$ such that $(v_2,\ v',\ v'',\ v_2)$ is the $3$-face incident to the edge $v'v$. However this results a $4$-cycle $(v_2,\ v,\ v',\ v'', v_2)$, which is a contradiction. Therefore, each vertex in $V(T)\backslash S$ adjacent to $v_2$ or $v_3$ is a branching vertex. This implies, $H$ is obtained by identifying the black-spotted vertex of $\frac{n-8}{3}$ copies of the star to either $v_2$ or $v_3$. It can be checked that $e(H)=\frac{5}{3}(n-2)+1.$  This completes the proof of Claim~\ref{cdc1}.    
\end{proof}
\begin{claim}\label{vfg}
For $k\geq 6$, 
then $H$ meets either the conditions of Lemma~\ref{mainlemma1} or the conditions of Lemma~\ref{rfr} or the conditions of Lemma~\ref{l22}.    
\end{claim}

\begin{proof}
Consider the longest path $L=(v_0,\ v_1, v_2,\ v_4,\dots,\ v_k)$. As $L$ is a longest path,  $v_1$ is a branching vertex and hence it is adjacent to two leaves where $v_0$ is one of the two vertices. Let the other vertex be $v_0'$. From the degree condition of Halin graph, $d_T(v_2)\geq 3$. Moreover, every vertex adjacent to $v_2$ is not a leaf. Since again $L$ is a longest path, each vertex adjacent to $v_2$ must be a branching vertex. If $v_3$ is a semi-branching vertex of degree at least 4, then $H$ satisfies the condition of Lemma~\ref{mainlemma1} and we are done. So we may assume that $v_3$ is not a semi-branching vertex or a semi-branching vertex with $d_T(v_3)=3$. In the former case, the edge $v_2v_3$ is an edge with the property that its end vertices are non-leaf and the two faces incident to the edge are with size at least $6$, and hence $H$ satisfies the conditions of Lemma~\ref{l22}. In the latter case, since $L$ is a longest path in $T$, $v_2$ is not a semi-branching vertex. Hence, the path $(v_2,\ v_3,\ v_4)$ is with a size of at least 6, and hence $H$ meets conditions of Lemma~\ref{rfr}. This completes the proof of Claim~\ref{vfg}.
\end{proof}
Notice that we finish the proof of Lemma~\ref{fbv} if $3|(n-3)$ or $3|(n-1)$. Indeed, if conditions of Lemma~\ref{mainlemma1} or  Lemma~\ref{l22} happen, then $e(H)\leq e(H')+2$, where $H'$ is an $(n-1)$-vertex $C_4$-free Halin graph. If $3|(n-1)$, then by induction we have $e(H)=e(H')+2\leq \left(\frac{5}{3}\left[(n-1)-3\right]+3\right)+2=\frac{5}{3}(n-1)$ and we are done. On the other hand if $3|(n-3)$, then by induction we have, $e(H)=e(H')+2\leq \left(\frac{5}{3}\left[(n-1)-2\right]+1\right)+2=\frac{5}{3}(n-3)+3$ and we are done. On the other hand, suppose conditions of Lemma~\ref{rfr} meet by $H$. In this case, $e(H)=e(H')+3$, where $H'$ is an $(n-2)$-vertex $C_4$-free Halin graph. If $3|(n-1)$, then $e(H)=e(H')+3\leq \left(\frac{5}{3}\left[(n-2)-2\right]+1\right)+3\leq \frac{5}{3}(n-1)$. If $3|(n-3)$, then $e(H)=e(H')+3\leq \frac{5}{3}\left[(n-2)-1\right]+3=\frac{5}{3}(n-3)+3$, and we are again done by induction.

Next, we give our argument on how we finish the proof when $k\geq 6$ and $3|(n-2)$. Since $H$ is a Halin graph and $v_{k-3}$ is a non-leaf vertex, $d_T(v_{k-3})\geq 3$. 

If $d_T(v_{k-3})=3$, then there is a vertex, say $x$ such that $x\in N_T(v_{k-3})$. If $x$ is a leaf in $T$, then it can be seen that the path $(v_{k-4},\ v_{k-3}, \ v_{k-2})$ is incident to a face of size at six. Then by Lemma~\ref{rfr} we have an $(n-2)$-vertex $C_4$-free Halin graph $H'$ such that $e(H)=e(H')+3$. Thus, by induction, $e(H)=e(H')+3\leq \left(\frac{5}{3}\left[(n-2)-3\right]+3\right)+3=\frac{5}{3}(n-2)+1$, and we are done by induction. On the other hand, if $x$ is not a leaf in $T$, then again it can be seen that the two bounded faces incident to the edge $v_{k-3}v_{k-2}$ are with size at least 6 and hence by Lemma~\ref{l22} we have an $(n-1)$-vertex $C_4$-free Halin graph $H'$ such that $e(H)=e(H')+1$. This implies by induction $e(H)=e(H')+1\leq \frac{5}{3}\left[(n-1)-1\right]+1=\frac{5}{3}(n-2)+1$ and we are done by induction.   

Now we may assume that $d_T(v_{k-3})\geq 4$. From Observation~\ref{ob1}, we have a branching vertex $w\in N_T(v_{k-2})$ such that the path $(v_{k-1},\ v_{k-2},\ w)$ is incident to a bounded face in $H$. Since $L$ is a longest path in $T$, every vertex $N_T(v_{k-2})\backslash \{v_{k-3}\}$ is a branching vertex. Let $F_1$ and $F_2$ be the two bounded faces incident to the edge $v_{k-3}v_{k-2}$. Notice that we have a  unique pair of leaves incident to each bounded face in $H$. Denote $v^u_1$ and $w^u_1$ be the leaves such that the edge $v^u_1w^u_1$ is incident to $F_1$ and $w^u_1$ is a leaf  adjacent to the vertex in $N(v_{k-2})\backslash \{v_{k-3}\}$. Similarly denote $v^l_1$ and $w^l_1$ be leaves such that the edge $v^l_1w^l_1$ is incident to $F_2$ and $w^l_1$ is a leaf  adjacent to the vertex in $N(v_{k-2})\backslash \{v_{k-3}\}$. Notice that $w^u_1$ and $w^l_1$ could be $v_k$ or $w_1$ as discussed in Observation~\ref{ob1}. Moreover, notice that both $|F_1|$ and $|F_2|$ are at least 5. If both $|F_1|$ and $|F_2|$ are with size at least $6$, then we finish the proof by induction using Lemma~\ref{l22} considering the edge $v_{k-3}v_{k-2}$.  
So we may assume one of the two faces is with size $5$. Without loss of generality assume $|F_1|=5$, and hence $u^u_1\in N(v_{k-3})$. 
Let $F_3$ be the bounded face in $H$ incident to the path $(v_1,\ v_2, \ u)$ as discussed in Observation~\ref{ob1}. We perform the following three operations on $H$ step by step to get a new and equivalent Halin graph $H'$ to $H$, i.e., $e(H')=e(H)$. 
\begin{enumerate}
\item Delete the edges $\ v_{k-3}v_{k-2}$,\ $v^u_1w^u_1$ and $v^l_1w^l_1$ from $H$. The resulting disconnected graph has two components and let $C^1$ and $C^2$ be the components containing $v_{k-3}$ and $v_{k-2}$ respectively.
\item Place the component $C^2$ in $F_3$ keeping its shape. Apply rigid motions on $C^2$ so that the pair of vertices $\{v_2,\ v_{k-2}\}$, $\{v_0',\ w^u_1\}$ and $\{w^l_1, u_1\}$ are joined by an edge after deleting the edge $v_0'u_1$ in $C^1$.  
\item Join the pair of vertices $\{v^u_1,\ v^l_1\}$ with an edge and denote the resulting graph by $H'$.
\end{enumerate}
Since $d_T(v_{k-3})\geq 4$ we have, $d_{H'}(v_{k-3})\geq 3$. Moreover $v(H')=v(H)$ and all the leaves of $T$ form the outer face of $H'$. Thus, $H'$ is a Halin graph equivalent to $H$. However, it may happen that $H'$ may contain a $C_4$. If a $4$-cycle exists in $H'$, then it must contain an edges in $\{v_0'w^u_1,\ w^l_1u_1,\ v_2v_{k-2},\ v^u_1v^l_1\}$. Since the two faces incident to the edge $v_2v_{k-2}$ in $H'$ are of size at least 6, then using Lemma~\ref{l21} for any cycle $\mathcal{C}$ containing an edge in $\{v_0'w^u_1,\ w^l_1u_1,\ v_2v_{k-2}\}$ we have, $|\mathcal{C}|\geq 6$. This implies, if the Halin graph $H'$ contains a $4$-cycle, then it must contain the edge $v^u_1v^l_1$. 

If $H'$ is $C_4$-free graph, then we can finish the proof by induction using Lemma~\ref{l22}. Indeed, the two bounded faces incident to the edge $v_2v_{k-2}$ in $H'$ are of size at least 6. From Lemma~\ref{l22} we have $e(H')=e(H'')+1$, where $H''$ is an $(n-1)$-vertex $C_4$-free Halin graph associated to $H'$ in the lemma. Therefore, $e(H)=e(H')+1\leq \frac{5}{3}\left[(n-1)-1\right]+1=\frac{5}{3}(n-2)+1$.

Now we assume $H'$ contains a $4$-cycle. As explained earlier, the cycle contains $v^u_1v^l_1$. Such a $4$-cycle happens when $v^l_1\in N(v)$, where $v$ is in $N_H(v_{k-3})$, or $v^l_1\in N_H(v_{k-4})$ or $v^l_1\in N_H(v_{k-3})$ and at least one of the edges in $\{v_{k-3}v^u_1,\ v_{k-3}v^l_1\}$ is incident to a $3$-face in $H$. Let the face, other than the $F_1$, and incident to $v_{k-3}v^u_1$ be $F_4$. Denote the associated leaf by $v^u_2$ such that $v^u_2v^u_1$ is incident to $F_4$. We distinguish the three situations separately to complete the proof.
\subsubsection*{Case 1: when $v^l_1\in N_H(v)$, where $v\in N_H(v_{k-3})$ and $v$ is a branching vertex }
Notice that $F_4$ may or may not be a $3$-face. We finish the proof by induction.  Indeed, if $F_4$ is not a $3$-face, then from Lemma~\ref{mainlemma1} using the semi-branching vertex $v_{k-3}$ and then applying Lemma~\ref{l22} using the edge $v_{k-3}v_{k-2}$, we get an $(n-2)$-vertex $C_4$-free Halin graph, $H^*$ such that $e(H)=e(H^*)+3\leq \left(\frac{5}{3}\left[(n-2)-3\right]+3\right)+3=\frac{5}{3}(n-2)+1$. On the other hand, if $F_4$ is a $3$-face, then applying Lemma~\ref{mainlemma1} on the the semi-branching vertex $v_{k-3}$ twice and then using Lemma~\ref{l22} on the edge $v_{k-3}v_{k-2}$ we get an $(n-3)$-vertex $C_4$-free Halin graph $H^*$. Moreover we have $e(H)=e(H^*)+5\leq \left(\frac{5}{3}\left[(n-3)-2\right]+1\right)+5=\frac{5}{3}(n-2)+1$, and we are done by induction.  

\subsubsection*{Case 2: when $v^l_1\in N_H(v_{k-4})$}
Actually, this may happen when $k\geq 7$. If $F_4$ is not a $3$-face, then we can still finish the proof by induction using Lemma~\ref{mainlemma1} considering $v_{k-3}$ as a semi-branching vertex and then applying Lemma~\ref{l22} using the edge $v_{k-3}v_{k-2}$ as the two faces incident to the edge are with size at least 6. Observe that the resulting graph is an $(n-2)$-vertex $C_4$-free Halin graph which only miss 3 edges. The same argument holds to finish the proof by induction if $F_4$ is a $3$-face and $d_T(v_{k-3})\geq 5.$ In this case the resulting graph is an $(n-3)$-vertex $C_4$-free Halin graph which only miss 5 edges. On the other hand, if $d_{T}(v_{k-3})=4$ and $F_4$ is a $3$-face, then we apply Lemma~\ref{mainlemma1} on $H$ using the $v^u_2$ and then Lemma~\ref{rfr} using the leaf $v^u_1$, so that we get an $(n-3)$-vertex $C_4$-free Halin graph which only miss 5 edges. This we can finish the proof by induction as shown in Case~1 above.

\subsubsection*{Case 3: when $v^l_1\in N_H(v_{k-3})$ and at least one of the edges in $\{v_{k-3}v^u_1,\ v_{k-3}v^l_1\}$ is incident to a $3$-face in $H$.}
Let the face other than the $F_2$ and incident to $v_{k-3}v^l_1$ be $F_5$. Let $v^l_2$ be a leaf in $H$ such that the edge $v^l_1v^l_2$ is incident to $F_5$. $F_5$ may or may not be a $3$-face. If both $F_4$ and $F_5$ are $3$-faces, then $d_{H'}(v_{k-3})\geq 5$. In this case, delete the vertices $v^u_1$ and $v^l_1$ from $H'$ and then add the edge $v^u_2v^l_2$. This leaves an $(n-2)$-vertex $C_4$-free Halin graph, say $H^*$, with 4 edges reduced. Next apply Lemma~\ref{l22} on the edge $v_2v_{k-2}$ on $H^*$, the resulting graph becomes an $(n-3)$-vertex $C_4$-free Halin graph missing only 5 edges from the original graph $H$. With this, we can complete the proof by induction as given in Case~1 above. Finally, assume $F_4$ is a $3$-face but not $F_5$. In this case, $d_{H'}(v_{k-3})\geq 4$. It can be checked that deleting $v^u_1$ and adding the edge $v^u_2v^l_1$ in $H'$ leaves an $(n-1)$-vertex $C_4$-free Halin graph which misses only two edges. Applying Lemma~\ref{l22} on the edge $v_2v_{k-2}$ results an $(n-2)$-vertex $C_4$-free Halin graph which loses only $3$ edges from the original graph $H$. Again in this case we can finish the induction as stated in Case~1. This completes the proof of Lemma~\ref{fbv}. 
\end{proof} 
\section{Basis of the induction steps}
To finish the proof by induction, we verify the bound when $n=16,\ 17$, and $18$. The following lemmas give the details. 

\begin{lemma}\label{fvc}
For a $16$-vertex $C_4$-free Halin graph $H$, $e(H)\leq 25$, i.e., $e(H)\leq \frac{5}{3}(n-1)$ where $n=16.$
\end{lemma}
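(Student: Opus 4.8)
The plan is to reduce the statement to an upper bound on the number of leaves of the characteristic tree, and then to exploit Euler's formula together with the $C_4$-free condition. Write $H=T\cup C$ and let $L$ denote the number of leaves of $T$, which equals the length of the outer cycle $C$. Since the tree edges and the cycle edges are disjoint (a cycle edge joins two leaves, which are never adjacent in $T$), we have $e(H)=e(T)+e(C)=(n-1)+L=15+L$ for $n=16$. Hence it suffices to prove that $L\le 10$.

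Next I would analyze the bounded faces. By Euler's formula the number of faces is $f=e-n+2=L+1$, so there are exactly $L$ inner faces. The boundary of a bounded face cannot consist only of tree edges (the tree has no cycle), so every inner face is incident to at least one edge of $C$; since each of the $L$ cycle edges lies on the boundary of the outer face and of exactly one inner face, a counting argument forces each inner face to be bounded by precisely one cycle edge $l_il_{i+1}$ together with the tree path joining $l_i$ and $l_{i+1}$. In particular $|F_i|=1+(\text{length of that path})$, and summing the face sizes gives $\sum_{\text{inner}}|F|=2e-L=30+L$.

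Now I would bring in the $C_4$-free hypothesis. Because a Halin graph is $3$-connected, every face is bounded by a simple cycle, so a face of size $4$ would be a $C_4$; hence no inner face has size $4$, and every inner face has size $3$ or $\ge 5$. A size-$3$ face corresponds exactly to two cyclically consecutive leaves $l_i,l_{i+1}$ sharing a common parent. Letting $t$ be the number of triangular inner faces and using that the remaining $L-t$ faces have size at least $5$, the size sum yields $3t+5(L-t)\le 30+L$, i.e. $t\ge 2L-15$. The key step, and the main obstacle, is to bound $t$ from above strongly enough: the crude estimate that triangles are edge-disjoint only gives $3t\le e$, which is too weak. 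Instead I would observe that if two consecutive cycle edges $l_{i-1}l_i$ and $l_il_{i+1}$ both bounded triangles, then $l_{i-1},l_i,l_{i+1}$ would all share a single parent $v$, producing the $4$-cycle $(v,\ l_{i-1},\ l_i,\ l_{i+1},\ v)$, which is forbidden. Thus the cycle edges that bound triangles form a matching in $C$, so $t\le \lfloor L/2\rfloor$.

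Finally, combining $2L-15\le t\le L/2$ gives $\tfrac{3}{2}L\le 15$, hence $L\le 10$ and $e(H)=15+L\le 25$, as required. I expect the remaining computations to be routine once the matching bound on $t$ is established; as a consistency check, the extremal graph $H_{16}$ (five degree-$3$ branching vertices arranged around a central vertex, giving $L=10$, five triangular faces, and five pentagonal faces) makes every inequality above tight, which confirms both the bound and the structure of the argument.
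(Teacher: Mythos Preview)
Your argument is correct and takes a genuinely different route from the paper's. The paper proves Lemma~\ref{fvc} by a structural case analysis on the length $k$ of a longest path in $T$: using Observation~\ref{ob1} and Lemma~\ref{l1} it shows $k\le 6$, rules out $k=5$, and for $k\in\{4,6\}$ pins down $T$ explicitly to read off $e(H)$. You instead reduce to $L\le 10$ via $e(H)=15+L$, use Euler and the face-size identity $\sum_{\text{inner}}|F|=2(n-1)+L$ to obtain $t\ge 2L-(n-1)$, and cap $t$ by the matching bound $t\le \lfloor L/2\rfloor$ coming from the observation that two consecutive triangular faces force three leaves to share a parent and hence a $C_4$. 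The paper's method has the side benefit of identifying the extremal tree exactly (it must be $T_{16}$), which your counting does not give directly. On the other hand, your argument uses none of Lemmas~\ref{l1}--\ref{l22}, and it is not tied to $n=16$: the same inequalities yield $L\le \lfloor 2(n-1)/3\rfloor$ and hence $e(H)\le (n-1)+\lfloor 2(n-1)/3\rfloor$ for \emph{every} $C_4$-free Halin graph, which already matches the upper bound of Theorem~\ref{t1} in all three residue classes without the inductive machinery of Lemma~\ref{fbv}.
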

\begin{proof}
We prove the statement addressing different situations for which the length of the longest path a characteristic tree may possibly contain. Let $T$ be the characteristic tree of $H$. Let $L$ be a longest path in  $T$ and $k$ be its length. It is very trivial to check that there is no $H$ when $k\leq 3.$
\begin{claim}\label{cl0}
 $k$ is at most 6. 
\end{claim}
\begin{proof}
Let $k=7$. From Observation~\ref{ob1} we have, $L=(v_0,\ v_1,\ v_2,\ \dots,\ v_7)$ and $S=\{v_0,v_0',v_1,v_2,u, u_1, u_2,v_7,v_7',v_{6},v_{5},w,w_1,w_2\}$. However, such vertex assignment leaves the non-leaf vertices $v_3$ and $v_4$ in the characteristic tree with degree 2, which is a contradiction to the definition of a Halin graph. Hence the maximum possible choice of $k$ is $6$. This completes the proof of Claim~\ref{cl0}.    
\end{proof}
\begin{claim}\label{cl2}
If $k=6$, then $e(H)=24.$     
\end{claim}
\begin{proof}
Denote $L=(v_0,\ v_1,\ v_2,\ v_3,\ v_4,\ v_5,\ v_6)$. Based on the notations Observation~\ref{ob1} we have, $S=\{v_0,v_0',v_1,v_2,u,u_1,u_2,v_6,v_6',v_5,v_4,w,w_1,w_2\}$. Since $v_3$ is a non-leaf in $T$, it must be adjacent with a vertex, say $v_3'$. Since $|S\cup \{v_3,v_3'\}|=16$, then every vertex of $T$ is now labeled. Clearly $P=\{v_0,v_0',u_1,u_2,v_3',w_1,w_2,v_6,v_6'\}$ is the set of all pendant vertices of $T$. Therefore, $e(H)=e(T)+|P|=15+9=24.$ This completes the proof of Claim~\ref{cl2}. It is easy to see that Figure~\ref{f3d} is the only characteristic tree $T$ meeting the case.
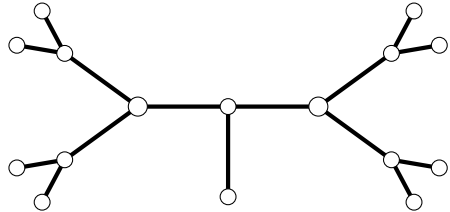
\begin{figure}[h]
\centering
\begin{tikzpicture}[scale=0.12]
\draw[ultra thick](-5,0)--(5,0)--(13.1,5.9)--(18.4,6.8)(13.1,5.9)--(15.6,10.6) (5,0)--(13.1,-5.9)--(18.4,-6.8)(13.1,-5.9)--(15.6,-10.6) (-23.1,5.9)--(-28.4,6.8)(-23.1,5.9)--(-25.6,10.6) (-23.1,-5.9)--(-25.6,-10.6)(-23.1,-5.9)--(-28.4,-6.8)(-5,0)--(-15,0)--(-23.1,5.9)(-15,0)--(-23.1,-5.9)(-5,0)--(-5,-10);
\draw[fill=white](-5,-10)circle(25pt);
\draw[fill=white](-15,0)circle(30pt);
\draw[fill=white](-5,0)circle(25pt);
\draw[fill=white](5,0)circle(30pt);
\draw[fill=white](13.1,5.9)circle(25pt);
\draw[fill=white](13.1,-5.9)circle(25pt);
\draw[fill=white](18.4,6.8)circle(25pt);
\draw[fill=white](15.6,10.6)circle(25pt);
\draw[fill=white](18.4,-6.8)circle(25pt);
\draw[fill=white](15.6,-10.6)circle(25pt);
\draw[fill=white](-23.1,5.9)circle(25pt);
\draw[fill=white](-23.1,-5.9)circle(25pt);
\draw[fill=white](-28.4,6.8)circle(25pt);
\draw[fill=white](-25.6,10.6)circle(25pt);
\draw[fill=white](-28.4,-6.8)circle(25pt);
\draw[fill=white](-25.6,-10.6)circle(25pt);
\end{tikzpicture}
\caption{The characteristic tree of a Halin graph on 16 vertices}
\label{f3d}
\end{figure}   
\end{proof}
\begin{claim}\label{cl3}
There is no $H$ when $k=5$.
\end{claim}
\begin{proof}
Denote $L=(v_0,\ v_1,\ v_2,\ v_3,\ v_4,v_5)$. From the discussion in Observation~\ref{ob1}, $S=\{v_0,v_0',v_1,v_2,u,u_1,u_2,v_5,v_5',v_4,v_3,w,w_1,w_2\}$. There are two remaining vertices which are not assigned yet. Let this vertex be labeled as $x_1$ and $x_2$. For a clear reason, the vertices are adjacent to either $v_2$ or $v_3$. Without loss of generality suppose $v_2$ is such a vertex. In this case, there is a vertex in $\{x_1,x_2\}$, say $x_1$, is adjacent to $v_2$ such that $xv_2$ and $uu_2$ are incident to the same $4$-face in $H$. But this is a contradiction to the fact that $H$ is a $C_4$-free Halin graph. This completes the proof of Claim~\ref{cl3}.
\end{proof}
\begin{claim}\label{cl4}
If $k=4$, then $e(H)=25.$
\end{claim}
\begin{proof}
Let $L=(v_0,\ v_1,\ v_2,\ v_3.\ v_4)$. By Observation~\ref{ob1}, $S=\{v_0,v_0', v_1,v_2,u,u_1,u_2,v_4, v_4',v_3\}$. There are $6$ vertices remaining, and label the vertices as $x_1,x_2,\dots,x_6$. Let $x_1$ be adjacent to $v_2$, and suppose for contradiction $x_1$ be a leaf in $T$. It can be checked that the two faces incident to the edge $v_2x_1$ in $H$ are of size $4$ or $3$. Moreover, non of the faces are of size 4. On the other hand, if both faces are with size 3, then again a $4$-cycle will be obtained as the two $3$-cycles sharing an edge forms a $4$-cycle. This is again a contradiction. Thus, each vertex in $R=\{x_1,x_2,\dots, x_6\}$ adjacent to $v_2$ is not pendant, and a vertex in $R$ adjacent to $v_2$ is again adjacent to two pendant vertices in $R$. Therefore, $T$ is obtained by identifying the dark-spotted vertex of three stars shown in Figure~\ref{f90} with the vertex $v_2$. The resulting graph is $T_{16}$ which is shown in Figure~\ref{f1}. It can be calculated that $e(H)=e(T)+10=25$. This completes the proof of Claim~\ref{cl4} and Lemma~\ref{fvc}.    
\end{proof}  
\end{proof}
\begin{lemma}\label{vbv}
For a $17$-vertex $C_4$-free Halin graph $H$, $e(H)\leq 26$, i.e., $e(H)\leq \frac{5}{3}(n-2)+1$ where $n=17.$
\end{lemma}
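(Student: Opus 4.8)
The plan is to mirror the case analysis of Lemma~\ref{fvc}, organizing the argument by the length $k$ of a longest path $L=(v_0,v_1,\dots,v_k)$ in the characteristic tree $T$. The starting point is the identity $e(H)=e(T)+|C|=(n-1)+p=16+p$, where $p$ denotes the number of leaves of $T$ and $q=17-p$ the number of non-leaves: the tree contributes $n-1=16$ edges and the outer cycle contributes exactly one edge per leaf. Hence it suffices to show $p\le 10$ (equivalently $q\ge 7$) in every admissible configuration, and I will in fact pin down $e(H)$ exactly for each surviving value of $k$.

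First I would bound $k$. As for $n=16$, there is no $H$ with $k\le 3$, and I claim $k\ge 7$ is impossible for $n=17$: by Observation~\ref{ob1} the set $S$ already accounts for $14$ labelled vertices, while for $k=7$ the interior vertices $v_3,v_4$ are non-leaves, each forcing a distinct third neighbour that lies outside $S$ (a common neighbour would close a cycle in $T$); this needs at least $14+2+2=18>17$ vertices, a contradiction, and larger $k$ is only worse. The case $k=4$ is ruled out by the structure established in Claim~\ref{clm1}: there $H$ is obtained by attaching $\tfrac{n-7}{3}$ stars to $v_2$, so $n=7+3m$, which has no integer solution for $n=17$ (equivalently, $k=4$ requires $3\mid(n-1)$, which fails). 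Thus only $k=5$ and $k=6$ remain.

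For $k=6$ the bound is immediate from counting non-leaves. The path vertices $v_1,v_2,v_3,v_4,v_5$, together with the branching vertices $u\in N(v_2)$ and $w\in N(v_4)$ supplied by Observation~\ref{ob1}, are seven pairwise distinct non-leaves; distinctness of $u$ and $w$ from each other and from the path follows since a vertex adjacent to two path vertices would close a cycle in the tree $T$. Therefore $q\ge 7$, so $p=17-q\le 10$ and $e(H)=16+p\le 26$, as required, with no finer structural input needed.

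The delicate case, and the main obstacle, is $k=5$, where this crude count gives only six forced non-leaves and hence the useless bound $p\le 11$. Here I would reuse the face argument of Claim~\ref{cdc1}: any vertex attached to $v_2$ or $v_3$ beyond those already in $S$ must be a branching (non-leaf) vertex carrying exactly two leaves, since a single extra leaf would, together with the two leaves of the adjacent branch $u$ (or $w$) and the $C_4$-free and longest-path conditions, close a $4$-cycle through a $3$-face of the embedding. Because exactly $17-14=3$ vertices lie outside $S$, they are forced to form precisely one such star (one branching vertex and its two leaves), yielding seven non-leaves, ten leaves, and $e(H)=26$; this is exactly the extremal configuration realized by $T_{17}$. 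Combining all cases gives $e(H)\le 26$, which is the assertion of the lemma. The crux is therefore the $C_4$-free verification, exactly as in Claim~\ref{cdc1}, that no additional leaf can be hung directly on $v_2$ or $v_3$, and (by maximality of $L$) none can be attached to the leaves or branch-vertices of the two end-stars either.
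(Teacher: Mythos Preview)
Your proof is correct and follows the same case analysis on the length $k$ of a longest path as the paper's own proof of Lemma~\ref{vbv}. Your treatment is slightly more economical: for $k=6$ you bound $e(H)$ directly via the identity $e(H)=16+p$ by exhibiting seven forced non-leaves (hence $p\le 10$) rather than pinning down the tree explicitly, and for $k\in\{4,5\}$ you import the self-contained structural arguments of Claims~\ref{clm1} and~\ref{cdc1}; since those arguments make no use of the inductive hypothesis of Lemma~\ref{fbv}, the forward references create no circularity.
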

\begin{proof}
We give similar proof to the one given in Lemma~\ref{l1}. Let $H$ be a $C_4$-free Halin graph on $17$ vertices, and $T$ denote its corresponding characteristic tree. Let $L$ be a longest path in $T$ with length $k$. It can be checked that $k\geq 4$.
\begin{claim}\label{cl5}
$k$ is at most $6$. 
\end{claim}
\begin{proof}
By Observation~\ref{ob1} we have, $S=\{v_0,v_0',v_1,v_2,u,u_1,u_2,v_k,v_k',v_{k-1},v_{k-2}, w,w_1,w_2\}$. Since $T$ has 17 vertices and $|S|=14$, three vertices are still not used. It can be seen that if $k\geq 7$, there exists a non-leaf vertex in $L$ with degree $2$, and this violates the definition of Halin graphs. Therefore $k\leq 6$. This completes the proof of Claim~\ref{cl5}.
\end{proof}
\begin{claim}\label{cl6}
If $k=6$, then $e(H)=26$.
\end{claim}
\begin{proof}
In this case $S=\{v_0,v_0',v_1,v_2,u,u_1,u_2,v_6,v_6',v_5,v_4,w,w_1,w_2\}$. Notice that the vertex $v_3$ in $L$ is a non-leaf in $T$, and there are two vertices in $T$, say $x_1$ and $x_2$, that are not in $L$. It is easy to check that non of the vertices is incident to $v_2$ or $v_4$, and both vertices are pendant and incident to $v_3$. Clearly $H$ is $C_4$-free and $e(H)=e(T)+10=26=\frac{5}{3}(n-2)+1$ where $n=17$. There are two possible non-isomorphic characteristic trees, see Figure~\ref{f3}. This completes the proof of Claim~\ref{cl6}. 
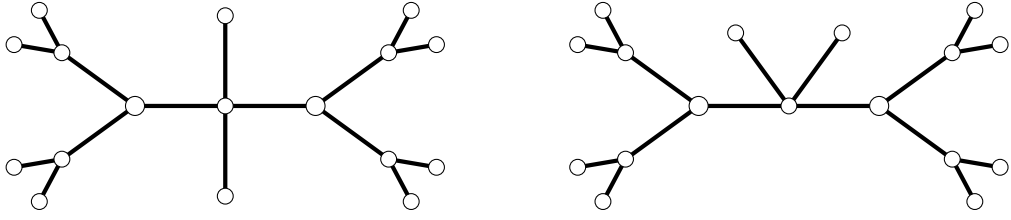
\begin{figure}[h]
\centering
\begin{tikzpicture}[scale=0.12]
\draw[ultra thick](-5,0)--(5,0)--(13.1,5.9)--(18.4,6.8)(13.1,5.9)--(15.6,10.6) (5,0)--(13.1,-5.9)--(18.4,-6.8)(13.1,-5.9)--(15.6,-10.6) (-23.1,5.9)--(-28.4,6.8)(-23.1,5.9)--(-25.6,10.6) (-23.1,-5.9)--(-25.6,-10.6)(-23.1,-5.9)--(-28.4,-6.8)(-5,0)--(-15,0)--(-23.1,5.9)(-15,0)--(-23.1,-5.9)(-5,10)--(-5,0)--(-5,-10);
\draw[fill=white](-5,10)circle(25pt);
\draw[fill=white](-5,-10)circle(25pt);
\draw[fill=white](-15,0)circle(30pt);
\draw[fill=white](-5,0)circle(25pt);
\draw[fill=white](5,0)circle(30pt);
\draw[fill=white](13.1,5.9)circle(25pt);
\draw[fill=white](13.1,-5.9)circle(25pt);
\draw[fill=white](18.4,6.8)circle(25pt);
\draw[fill=white](15.6,10.6)circle(25pt);
\draw[fill=white](18.4,-6.8)circle(25pt);
\draw[fill=white](15.6,-10.6)circle(25pt);
\draw[fill=white](-23.1,5.9)circle(25pt);
\draw[fill=white](-23.1,-5.9)circle(25pt);
\draw[fill=white](-28.4,6.8)circle(25pt);
\draw[fill=white](-25.6,10.6)circle(25pt);
\draw[fill=white](-28.4,-6.8)circle(25pt);
\draw[fill=white](-25.6,-10.6)circle(25pt);
\end{tikzpicture}\qquad\qquad
\begin{tikzpicture}[scale=0.12]
\draw[ultra thick](-5,0)--(5,0)--(13.1,5.9)--(18.4,6.8)(13.1,5.9)--(15.6,10.6) (5,0)--(13.1,-5.9)--(18.4,-6.8)(13.1,-5.9)--(15.6,-10.6) (-23.1,5.9)--(-28.4,6.8)(-23.1,5.9)--(-25.6,10.6) (-23.1,-5.9)--(-25.6,-10.6)(-23.1,-5.9)--(-28.4,-6.8)(-5,0)--(-15,0)--(-23.1,5.9)(-15,0)--(-23.1,-5.9)(-5,0)--(0.9,8.1)(-5,0)--(-10.9,8.1);
\draw[fill=white](-15,0)circle(30pt);
\draw[fill=white](-5,0)circle(25pt);
\draw[fill=white](5,0)circle(30pt);
\draw[fill=white](13.1,5.9)circle(25pt);
\draw[fill=white](13.1,-5.9)circle(25pt);
\draw[fill=white](18.4,6.8)circle(25pt);
\draw[fill=white](15.6,10.6)circle(25pt);
\draw[fill=white](18.4,-6.8)circle(25pt);
\draw[fill=white](15.6,-10.6)circle(25pt);
\draw[fill=white](-23.1,5.9)circle(25pt);
\draw[fill=white](-23.1,-5.9)circle(25pt);
\draw[fill=white](-28.4,6.8)circle(25pt);
\draw[fill=white](-25.6,10.6)circle(25pt);
\draw[fill=white](-28.4,-6.8)circle(25pt);
\draw[fill=white](-25.6,-10.6)circle(25pt);
\draw[fill=white](0.9,8.1)circle(25pt);
\draw[fill=white](-10.9,8.1)circle(25pt);
\end{tikzpicture}
\caption{Characteristic tree of a $17$-vertex Halin graphs}
\label{f3}
\end{figure}
\end{proof}
\begin{claim}\label{cl6}
If $k=5$, then $e(H)=26$.
\end{claim}
\begin{proof}
By Observation~\ref{ob1}, $S=\{v_0,v_0',v_1,v_2,u,u_1,u_2,v_5,v_5',v_4,v_3,w,w_1,w_2\}$. There are three vertices in $T$ which are not labeled yet. Denote the vertices as $x_1,x_2$ and $x_3$, If one of the three vertices is adjacent to $v_2$ (similarly $v_3$) and is a pendant vertex in $T$, then all the remaining two vertices are pendant vertices and adjacent to $v_2$ or $v_3$. One of the three edges forms a $4$-face containing the edge in $\{v_1v_0,v_2u,v_3w,v_4v_5\}$. But this results in a contradiction, as $H$ is $C_4$-free. Therefore, the only possible situation that $T$ exists is when the three vertices are connected to the $L$ by identifying the black-spotted vertex of the star, see Figure~\ref{f90}, with either $v_2$ or $v_3$. In this case, we get $T$ isomorphic to $T_{17}$, which is shown in Figure~\ref{f1}. It can be seen that $T$ is $C_4$-free and $e(H)=26$.   
\end{proof}
\begin{claim}\label{cl7}
There is no $H$ when $k=4$.
\end{claim}
\begin{proof}
In this case $S=\{v_0,v_0',v_1,v_2,u,u_1,u_2,v_4,v_4',v_3\}$. $|S|=10.$ There are 7 vertices, say $x_1,x_2\dots, x_7$ not not labeled in $T$. None of these vertices is adjacent to $v_1$ or $v_3$. In other words, if any of the seven vertices is adjacent to a vertex in $L$, then it is with $v_2$. It can be seen that there is a vertex in $\{x_1,x_2,\dots,x_7\}$, which is adjacent to $v_2$ and is a pendant in $T$. Suppose $x_1$ is such a vertex. By the choice of the path $L$, $x_1v_2$ can not be incident to a face of size at least 5. In other words, the two faces incident to the edge are either a $3$-face or a $4$-face. But in any possibility, $H$ contains a $C_4$, which is a contradiction. This completes the proof of Claim~\ref{cl7} and Lemma~\ref{vbv}.    
\end{proof}
\end{proof}
\begin{lemma}\label{nbn}
For an $18$-vertex $C_4$-free Halin graph $H$, $e(H)\leq 28$, i.e., $e(H)\leq \frac{5}{3}(n-3)+3$ where $n=18.$
\end{lemma}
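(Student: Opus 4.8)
The plan is to follow the strategy used for $n=16$ and $n=17$ in Lemmas~\ref{fvc} and~\ref{vbv}, fixing a longest path $L=(v_0,\ v_1,\ \dots,\ v_k)$ in the characteristic tree $T$ and arguing by its length $k$. Since $H=T\cup C$ has exactly $n-1=17$ tree edges while the outer cycle contributes one edge per leaf of $T$, we have $e(H)=17+\ell$, where $\ell$ denotes the number of leaves (equivalently $|C|$). The lemma therefore reduces to the single inequality $\ell\leq 11$, that is, to showing that $T$ has at least $7$ non-leaves. As in the earlier base cases one checks $k\geq 4$.

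First I would bound $k$ from above. For $k\geq 5$, Observation~\ref{ob1} supplies the $14$ labelled vertices of $S$, while each deep interior vertex $v_3,\dots,v_{k-3}$ lies outside $S$, is a non-leaf, and hence needs at least one private new neighbour (it cannot be a path vertex or an element of $S$, since that would create a cycle in $T$). Counting these gives $n\geq 14+2(k-5)=2k+4$, so $k\leq 7$.

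For $k\in\{6,7\}$ the required count is immediate. The internal path vertices $v_1,\dots,v_{k-1}$ are all non-leaves, and by Observation~\ref{ob1} the vertices $u\in N(v_2)$ and $w\in N(v_{k-2})$ are non-leaves lying off $L$; since $k\geq 6$ we have $v_2\neq v_{k-2}$, so $u\neq w$ and both differ from every $v_i$. This produces at least $(k-1)+2=k+1\geq 7$ non-leaves, whence $\ell\leq 11$ and $e(H)\leq 28$ (and in fact $e(H)\leq 27$ when $k=7$).

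The crux, and the step I expect to cause the most trouble, is ruling out $k=4$ and $k=5$, where the naive count only yields $5$ and $6$ non-leaves. Here I would rerun the structural analysis of Claims~\ref{clm1} and~\ref{cdc1}: when $k\leq 5$ there are no deep interior vertices, and because $L$ is longest, every neighbour of the dense vertices $v_2$ (and $v_3=v_{k-2}$ when $k=5$) has a subtree of depth at most one; the $C_4$-extraction argument there (a lone pendant at such a centre forces two $3$-faces sharing an edge, hence a $4$-cycle) shows that every such neighbour must be a branching vertex, i.e.\ it contributes exactly three vertices. This forces $n=1+3d\equiv 1\pmod 3$ when $k=4$ and $n=2+3(m_2+m_3)\equiv 2\pmod 3$ when $k=5$. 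Since $18\equiv 0\pmod 3$, neither case occurs, so every $18$-vertex $C_4$-free Halin graph has $k\in\{6,7\}$ and satisfies $e(H)\leq 28$. The delicate points are the face bookkeeping that makes the lone-pendant argument (and hence the divisibility obstruction) rigorous, and checking that the extremal $k=6$ configuration --- with $v_3$ carrying three pendants arranged so that no three are consecutive on $C$, reproducing the tree $T_{18}$ of Figure~\ref{f1} --- is genuinely $C_4$-free and attains $28$.
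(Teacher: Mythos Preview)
Your proposal is correct and follows the same overall scaffolding as the paper---fix a longest path $L$ of length $k$, bound $k\le 7$, and treat the values of $k$ separately---but your execution is cleaner in two places.

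For $k\in\{6,7\}$ the paper (Claims~\ref{cl101} and~\ref{cl10}) explicitly classifies the few admissible characteristic trees and then counts edges. Your reduction $e(H)=17+\ell$ together with the observation that $v_1,\dots,v_{k-1},u,w$ already furnish $k+1\ge 7$ non-leaves dispatches both cases in one line; the paper does not make this leaf-count reduction explicit. What the paper's longer argument buys is an identification of the extremal tree $T_{18}$, which you only mention in passing.

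For $k\in\{4,5\}$ the paper (Claim~\ref{cl13}) redoes a local face-size analysis from scratch, arriving at a contradiction by exhibiting a forced $4$-cycle. You instead import the structural conclusions of Claims~\ref{clm1} and~\ref{cdc1}: every extra branch at $v_2$ (or $v_3$) must be a full star $K_{1,3}$, whence $n\equiv 1\pmod 3$ when $k=4$ and $n\equiv 2\pmod 3$ when $k=5$, both impossible for $n=18$. This is a legitimate shortcut, since those claims are purely structural and nowhere use the hypothesis $n\ge 19$ under which they are stated in Lemma~\ref{fbv}; you correctly flag that the ``lone pendant forces two $3$-faces sharing an edge'' step is the one requiring care. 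The paper's version is more self-contained but also more repetitive.
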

\begin{proof}
Let $T$ be the characteristic tree of $H$, and $L=(v_0,\ v_1,\ v_2,\ \dots,\ v_k)$ be a longest path in $T$. It can be checked that $k\geq 4.$ 
\begin{claim}\label{cl8}
$k$ is at most 7. 
\end{claim}
\begin{proof}
By Observation~\ref{ob1}, $|S|=14$. We remain four vertices that are not labeled yet. If three of the vertices are already in $L$, then at least two vertices, which are non-leaf, become degree-$2$. This is a contradiction and therefore $k\leq 7$. This completes the proof of Claim~\ref{cl8}.   
\end{proof}
\begin{claim}\label{cl101}
If $k=7$, $e(H)=27$.    
\end{claim}
\begin{proof}
Here $S=\{v_0,v_0',v_1,v_2,u,u_1,u_2,v_7,v_7',v_6,v_5,w,w_1,w_2\}$. Observe that $v_3$ and $v_4$ are vertices in $L$ but not addressed yet, as the vertices are degree-$2$ and should be with the degree at least 3. There are two remaining vertices, say $x_1$ and $x_2$, which are not on $L$ but in $T$. From the degree condition of a Halin graph, $v_3$ and $v_4$ are adjacent to only one vertex in $\{x_1,x_2\}$. Let $x_1\in N_T(v_3)$, and  $x_2\in N_T(v_4)$. Notice that the two edges can not be incident to the same face. Otherwise, the $x_1x_2\in E(H)$ and we get $4$-cycle $(v_4,\ v_3,\ x_1,\ x_2,\ v_4)$, which is a contradiction. Therefore the two edges must be on opposite sides of $L$ in the planar embedding of $H$. $H$ which is shown in Figure~\ref{f5} is the only Halin graph with such property. It can be seen that $H$ is $C_4$-free and $e(H)=e(T)+10=27.$ This completes the proof of Claim~\ref{cl101}. 
\begin{figure}[h]
\centering
\begin{tikzpicture}[scale=0.12]
\draw[ultra thick](5,0)--(15,0)--(23.1,5.9)--(28.4,6.8)(23.1,5.9)--(25.6,10.6) (15,0)--(23.1,-5.9)--(28.4,-6.8)(23.1,-5.9)--(25.6,-10.6) (-23.1,5.9)--(-28.4,6.8)(-23.1,5.9)--(-25.6,10.6) (-23.1,-5.9)--(-25.6,-10.6)(-23.1,-5.9)--(-28.4,-6.8)(-5,0)--(-15,0)--(-23.1,5.9)(-15,0)--(-23.1,-5.9)(5,10)--(5,0)(-5,-10)--(-5,0)--(5,0);
\draw[fill=white](5,0)circle(25pt);
\draw[fill=white](5,10)circle(25pt);
\draw[fill=white](-5,-10)circle(25pt);
\draw[fill=white](-15,0)circle(30pt);
\draw[fill=white](-5,0)circle(25pt);
\draw[fill=white](15,0)circle(30pt);
\draw[fill=white](23.1,5.9)circle(25pt);
\draw[fill=white](23.1,-5.9)circle(25pt);
\draw[fill=white](28.4,6.8)circle(25pt);
\draw[fill=white](25.6,10.6)circle(25pt);
\draw[fill=white](28.4,-6.8)circle(25pt);
\draw[fill=white](25.6,-10.6)circle(25pt);
\draw[fill=white](-23.1,5.9)circle(25pt);
\draw[fill=white](-23.1,-5.9)circle(25pt);
\draw[fill=white](-28.4,6.8)circle(25pt);
\draw[fill=white](-25.6,10.6)circle(25pt);
\draw[fill=white](-28.4,-6.8)circle(25pt);
\draw[fill=white](-25.6,-10.6)circle(25pt);
\end{tikzpicture}
\caption{Characteristic tree of an $18$-vertex Halin graph}
\label{f5}
\end{figure}
\end{proof}

\begin{claim}\label{cl10}
If $k=6$, $e(H)\leq 28$.    
\end{claim}
\begin{proof}
$S=\{v_0,v_0',v_1,v_2,u,u_1,u_2,v_6,v_6',v_5,v_4,w,w_1,w_2\}$ and $v_3$ is a non-leaf in $L$. Moreover, there are three vertices, say $x_1,x_2$, and $x_3$ which are not in $L$. Thus, $v_3$ must be adjacent to one of the three vertices. If any of the remaining three vertices is adjacent to a vertex in $\{v_2,v_4\}$, it is easy to get a $4$-cycle, which is a contradiction. We have two possible graphs.

The first graph is when all the three vertices, $x_1,x_2$, and $x_3$ are adjacent to $v_3$. It is easy to see that, not all edges, $x_1v_3, x_2v_3$, and $x_3v_3$, are on the same side of $L$. Otherwise, $H$ contains a $C_4$. In this case, the characteristic tree is $T_{18}$ and is shown in Figure~\ref{f1}. Clearly $e(h)=e(T)+11=28.$

The second graph is obtained by identifying the dark-spotted vertex of the star with the $v_3$. The graph is shown in Figure~\ref{f16}. Here $e(H)=e(T)+10=27.$ This completes the proof of Claim~\ref{cl10}
\begin{figure}[h]
\centering
\begin{tikzpicture}[scale=0.12]
\draw[ultra thick](-5,0)--(5,0)--(13.1,5.9)--(18.4,6.8)(13.1,5.9)--(15.6,10.6) (5,0)--(13.1,-5.9)--(18.4,-6.8)(13.1,-5.9)--(15.6,-10.6) (-23.1,5.9)--(-28.4,6.8)(-23.1,5.9)--(-25.6,10.6) (-23.1,-5.9)--(-25.6,-10.6)(-23.1,-5.9)--(-28.4,-6.8)(-5,0)--(-15,0)--(-23.1,5.9)(-15,0)--(-23.1,-5.9)(-5,10)--(0.9,18.1)(-5,10)--(-10.9,18.1)(-5,0)--(-5,10);
\draw[fill=white](-5,10)circle(25pt);
\draw[fill=white](-15,0)circle(30pt);
\draw[fill=white](-5,0)circle(25pt);
\draw[fill=white](5,0)circle(30pt);
\draw[fill=white](13.1,5.9)circle(25pt);
\draw[fill=white](13.1,-5.9)circle(25pt);
\draw[fill=white](18.4,6.8)circle(25pt);
\draw[fill=white](15.6,10.6)circle(25pt);
\draw[fill=white](18.4,-6.8)circle(25pt);
\draw[fill=white](15.6,-10.6)circle(25pt);
\draw[fill=white](-23.1,5.9)circle(25pt);
\draw[fill=white](-23.1,-5.9)circle(25pt);
\draw[fill=white](-28.4,6.8)circle(25pt);
\draw[fill=white](-25.6,10.6)circle(25pt);
\draw[fill=white](-28.4,-6.8)circle(25pt);
\draw[fill=white](-25.6,-10.6)circle(25pt);
\draw[fill=white](0.9,18.1)circle(25pt);
\draw[fill=white](-10.9,18.1)circle(25pt);
\end{tikzpicture}
\caption{Characteristic tree of an $18$-vertex Halin graph}
\label{f16}
\end{figure}    
\end{proof}
\begin{claim}\label{cl13}
There is no $H$ when $k=4$ or $5$.
\end{claim}
\begin{proof}
Let $k=5$. In this case we have, $S=\{v_0,v_0',v_1,v_2,u,u_1,u_2,v_5,v_5',v_4,v_3,w,w_1,w_2\}$. and there are four vertices, say $x_1,\ x_2,\ x_3$ and $x_4$, which are not in $L$. If any of the vertices is incident to $L$, then it must be adjacent to either $v_2$ or $v_3$. Without loss of generality let $x_1$ be adjacent to $v_2$. Then the two faces which are incident to the edge $x_1v_2$ are either both $3$-face or both $4$-face or a mix of the two. But in all three cases, $H$ contains a $4$-cycle, which is a contradiction. Therefore, $x_1$ must be adjacent to two pendant vertices, say $x_2$ and $x_3$. However, this results in a vertex $x_4$ which is not incident to any of the vertex in $L$. Which is a contradiction as $T$ is a tree.  

A similar argument can be given to show that we do not have an $18$-vertex $C_4$-free Halin graph such that the characteristic tree has the longest path of length 4. This completes the proof of Claim~\ref{cl13} and Lemma~\ref{nbn}.   
\end{proof}
\end{proof}
\section{Conjectures and concluding remarks}
As mentioned earlier in the beginning, Bondy and Lov\'asz proved that a Halin graph is pancyclic if every non-leaf in its characteristic tree is of degree at least 4. It is also remarked that, if the characteristic tree contains a vertex of degree three, cycles of all lengths will still be in the graph with a possible exception of an even-length cycle. The following is our conjecture concerning the sharp upper bound of the Halin Tur\'an number of the $6$-cycle. 
\begin{conjecture}
For $n\geq 21,$ $$\ex_{\hh}(n,C_6)\leq \frac{8}{5}(n-1).$$
\end{conjecture}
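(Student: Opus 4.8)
The plan is to mirror the architecture of the $C_4$ argument: a matching construction for the lower bound, an induction on $n$ for the upper bound driven by a family of face‑based reduction lemmas, and a handful of small base cases verified by hand. The first simplification is to record that for any Halin graph $H=T\cup C$ on $n$ vertices, $e(H)=(n-1)+\ell$, where $\ell$ is the number of leaves of $T$ (the $n-1$ tree edges together with the $\ell$ outer‑cycle edges). Thus the conjecture is equivalent to the assertion that a $C_6$‑free Halin graph has at most $\tfrac{3}{5}(n-1)$ leaves. The central tool carries over: by Lemma~\ref{kjh}, every cycle through an \emph{outer} edge incident to a bounded face of size at least $7$ has length at least $7$, and since every cycle in a Halin graph uses at least one outer edge (the tree being acyclic), a Halin graph all of whose bounded faces have size at least $7$ is automatically $C_6$‑free. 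The reductions will be engineered so that every newly created edge is incident only to large faces, so that by Lemma~\ref{kjh} and Lemma~\ref{l21} no $6$‑cycle can thread through a new edge.

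For the lower bound I would exhibit, for each residue of $n$ modulo $5$, a $C_6$‑free Halin graph whose leaf count meets $\tfrac{3}{5}(n-1)$ up to an additive constant. Since the target leaf slope is $3/5$, the natural candidate building block contributes $5$ vertices and $8$ edges, i.e.\ two internal vertices and three leaves; concretely, a depth‑two ``arm'' $a\!-\!b$ grafted onto a hub, where $a$ carries one pendant leaf and $b$ carries two, the three leaves appearing consecutively on the outer cycle. Repeatedly grafting such arms onto a fixed hub (exactly as stars are grafted in the $C_4$ construction) produces the bulk of the graph, while small base trees $T_{21},T_{22},\dots,T_{25}$ handle the five residue classes. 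The intuition for why $3/5$ and not more is optimal is that a single vertex may carry up to four consecutive leaves (five would close a $C_6$ with that vertex), but such heavy clusters cannot be packed adjacently without forming a $C_6$, and interleaving the necessary spacer vertices drives the global leaf density down. The genuinely nontrivial step is verifying $C_6$‑freeness of these constructions: one must confirm that no $6$‑cycle is produced by weaving through two neighbouring arms and the cycle edges between them.

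For the upper bound I would run induction on $n$. As in Lemma~\ref{l1} and Observation~\ref{ob1}, I would first analyse the neighbourhood of a longest path $L=(v_0,\dots,v_k)$ in $T$, proving the local analogue of Lemma~\ref{l1} (at most four consecutive leaves per vertex) and classifying the densest admissible patterns at the semi‑pendant end of $L$. I would then develop reductions in the spirit of Lemma~\ref{mainlemma1}, Lemma~\ref{rfr} and Lemma~\ref{l22}, with the face‑size thresholds raised accordingly: for instance, to contract a tree edge one now needs both incident faces of size at least $8$, so that the shortened cycles still have length at least $7$, and deletion‑and‑join operations must leave new faces of size at least $7$. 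Each such move deletes a bounded number of vertices and reconnects the outer cycle and tree to produce a smaller $C_6$‑free Halin graph with edge deficit matching the $\tfrac{8}{5}$ slope, so the induction closes when $k$ is large; the small‑$k$ cases are pinned down directly, as in Claims~\ref{clm1} and~\ref{cdc1}.

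The hard part will be certifying $C_6$‑freeness throughout. Unlike a $C_4$, which is essentially forced to be a facial or near‑facial quadrilateral, a $6$‑cycle can be genuinely non‑facial, weaving through several interior tree vertices and their child‑pendant leaves — precisely the paths exploited in the proof of Lemma~\ref{kjh}. Consequently no purely local condition on individual face sizes can by itself guarantee the absence of a $C_6$, so a global accounting of how tree‑paths between leaves can close into length‑$6$ cycles is needed, both to validate the extremal constructions and, more delicately, to ensure that each reduction step does not secretly create a $C_6$. I also expect the end‑of‑path case analysis to be markedly heavier than in the $C_4$ proof: the extremal arm now has depth two rather than one, so there are more degree and face‑size configurations at $v_{k-2},v_{k-3},v_{k-4}$ to resolve, and there are more base cases (covering $n\equiv 0,1,2,3,4 \pmod 5$). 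Getting these two ingredients to interlock cleanly enough to preserve the exact additive constant is where the real difficulty lies.
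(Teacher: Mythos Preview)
The paper does not prove this statement: it is stated as a conjecture in the concluding section, with no argument offered. So there is no ``paper's own proof'' to compare against, and what you have written is a strategy for attacking an open problem rather than a reconstruction of an existing proof.

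As a strategy it is sensible in outline, but there is a concrete flaw in the proposed lower-bound construction. You graft depth-two arms $\text{hub}-a-b$ (with one pendant at $a$ and two at $b$) onto a common hub, with the three leaves of each arm consecutive on the outer cycle. Look at the bounded face separating two consecutive arms: its boundary runs from the last leaf of one arm up through $b,a,\text{hub},a'$ and down to the first leaf of the next arm, which is a hexagon whenever the arms are oriented the same way. So the construction as described contains a $C_6$. One can try to rescue it by alternating the orientation of successive arms (this replaces the hexagonal inter-arm faces by pentagons and heptagons), but then one must still rule out non-facial $6$-cycles threading through two arms via the hub, and this is exactly the global obstruction you flag in your final paragraph. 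In short, the $3/5$ leaf-density family needs to be built and checked much more carefully than the one-line description suggests.

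On the upper-bound side, your plan to lift Lemmas~\ref{mainlemma1}--\ref{l22} by raising the face thresholds is reasonable, but note that the $C_4$ proof uses crucially that a $4$-cycle through a given edge is controlled by the \emph{two} incident faces (Lemma~\ref{l21}). For $C_6$ this fails: a $6$-cycle through $e$ can leave and re-enter the two faces incident to $e$ via child-pendant detours, so ``both incident faces have size at least $8$'' does not by itself forbid a $6$-cycle through $e$ after contraction. You will need a genuinely stronger local criterion (or a different reduction) before the induction can close, and this, rather than the bookkeeping of five residue classes, is where the real gap lies.
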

\section*{Dedication and last word}
The author wishes to dedicate this research in memory of the young \textbf{AMHARA FANOS} who bravely gave their lives for their people, who have endured government-led ethnic cleansing and genocidal attacks over the past five years. In line with this, the author calls upon nations and the global community to stay informed about the current state of the country and not be influenced by the government's deceptive \say{prosperity} narratives.

\section*{Conflict of interest}
The author declares no conflict of interest.

\end{document}